\DeclareMathOperator{\id}{id}
\DeclareMathOperator{\el}{el}
\DeclareMathOperator{\op}{op}
\DeclareMathOperator{\Mod}{\mathbf{Mod}}
\DeclareMathOperator{\Rep}{Rep}
\DeclareMathOperator{\Vect}{\mathbf{Vect}}
\DeclareMathOperator{\fp}{fp}
\DeclareMathOperator{\fd}{fd}
\DeclareMathOperator{\Hom}{Hom}
\DeclareMathOperator{\End}{End}
\DeclareMathOperator{\Sym}{Sym}
\DeclareMathOperator{\sgn}{sgn}
\DeclareMathOperator{\Rex}{\mathbf{Rex}}
\DeclareMathOperator{\Ind}{Ind}
\DeclareMathOperator{\Coh}{\mathbf{Coh}}
\DeclareMathOperator{\QCoh}{\mathbf{QCoh}}
\DeclareMathOperator{\fpqc}{\mathit{fpqc}}
\DeclareMathOperator{\pr}{pr}
\DeclareMathOperator{\Lex}{\mathbf{Lex}}
\newcommand{\ca}[1]{\mathscr{#1}}
\newcommand{\ten}[1]{\mathop{{\otimes}_{#1}}}
\newcommand{\defl}{\mathrel{\mathop:}=}
\theoremstyle{plain}
\newtheorem{thm}{Theorem}[subsection]
\newtheorem{prop}[thm]{Proposition}
\newtheorem{lemma}[thm]{Lemma}
\newtheorem{cor}[thm]{Corollary}
\theoremstyle{definition}
\newtheorem{rmk}[thm]{Remark}
\newtheorem{dfn}[thm]{Definition}
\newtheoremstyle{citing}{}{}{\itshape}{}{\bfseries}{.}{ }{\thmnote{#3}}
\theoremstyle{citing}
\newtheoremstyle{citingdfn}{}{}{}{}{\bfseries}{.}{ }{\thmnote{#3}}
\theoremstyle{citingdfn}
\numberwithin{equation}{section}
\keywords{Fiber functors, Tannakian categories}
\subjclass[2010]{14A20, 18D10}
\author{Daniel Sch\"appi}
\thanks{This research was supported by the DFG grant: SFB 1085 ``Higher invariants.''}
\address{Fakult{\"a}t f{\"u}r Mathematik,
Universit{\"a}t Regensburg,
93040 Regensburg,
Germany}
\email{daniel.schaeppi@ur.de}
\title{Uniqueness of fiber functors and universal Tannakian categories}
\begin{document}

\begin{abstract}
 The principal aim of this note is to give an elementary proof of the fact that any two fiber functors of a Tannakian category are locally isomorphic. This builds on an idea of Deligne concerning scalar extensions of Tannakian categories and implements a proof strategy which Deligne attributes to Grothendieck. Besides categorical generalities, the proof merely relies on basic properties of exterior powers and the classification of finitely generated modules over a principal ideal domain.
 
 Using related ideas (but less elementary means) we also present an alternative characterization of Tannakian categories among the more general weakly Tannakian categories. As an application of this result we can construct from any right exact symmetric monoidal abelian category with simple unit object a universal Tannakian category associated to it.
\end{abstract}




\maketitle

\tableofcontents

\section{Introduction}\label{section:introduction}

 The notion of a Tannakian category provides a characterization of categories of coherent sheaves $\Coh(G)$ of an $\fpqc$-gerbe $G$ with affine band and affine diagonal. It is comparatively straightforward to show from the definition that a Tannakian category is equivalent to the category of coherent sheaves $\Coh(X)$ of some $\fpqc$-stack $X$, but the proof that the stack in question is in fact a gerbe presented in \cite{DELIGNE} is quite technical, in particular in the case where the ground field is imperfect.
 
 One needs to show is that any two fiber functors of a ($k$-linear) Tannakian category $\ca{T}$ are locally isomorphic in the $\fpqc$-topology, or, equivalently, that the diagonal of the corresponding stack is faithfully flat. This, in turn, follows readily from the fact that the Deligne tensor product $\ca{T} \boxtimes \ca{T}$ of $\ca{T}$ with itself is again a Tannakian category. In \cite{DELIGNE_SEMI_SIMPLE}, Deligne mentions an alternative strategy to prove this result suggested by Grothendieck, which relies on a scalar extension result for Tannakian categories. Namely, starting from a fiber functor $w \colon \ca{T} \rightarrow \Vect_K^{\fd}$, one obtains a faithful and exact tensor functor
 \[
 w \boxtimes \id \colon \ca{T} \boxtimes \ca{T} \rightarrow \Vect_K^{\fd} \boxtimes \ca{T}
 \]
 with target the scalar extension of $\ca{T}$. Since this faitfhul and exact tensor functor detects simple objects and objects with duals, it therefore suffices to check that $\Vect_K^{\fd} \boxtimes \ca{T}$ is a Tannakian category over $K$.
 
 In \cite[\S 5]{DELIGNE_SEMI_SIMPLE}, Deligne gives an elementary proof of this scalar extension result in the case where $K$ is a \emph{finite} field extension of $k$. Note that this result can already be used as outlined above if there exists a fiber functor $\ca{T} \rightarrow \Vect_K^{\fd}$ where $K$ is a finite extension of $k$. Moreover, we know that such a fiber functor always exists in the case where $\ca{T}$ is finitely generated by \cite[Corollaire~6.20]{DELIGNE}. However, the proof of this Corollary involves precisely the most technical parts of \cite{DELIGNE} which we want to avoid.
 
 In \S \ref{section:background}, we provide the necessary background on weakly Tannakian categories and scalar extensions of linear categories. In \S \ref{section:scalar_extension}, we recall some basic properties of exterior powers and complete the proof that the scalar extension of a Tannakian category is again Tannakian. In \S \ref{section:universal_tannakian_categories}, we give a new characterization of Tannakian categories among the weakly Tannakian categories. As an application, we study the question when the universal weakly Tannakian categories introduced in \cite[\S 5.6]{SCHAEPPI_COLIMITS} are in fact Tannakian. Namely, we show that the universal weakly Tannakian category associated to a right exact symmetric monoidal abelian category is Tannakian if the unit object of the original category is simple.
 
 \section*{Acknowledgements}
 This work was carried out at the SFB 1085 ``Higher invariants'' in Regensburg whose support is gratefully acknowledged.
 
\section{Background}\label{section:background}

\subsection{Weakly Tannakian categories}\label{section:weakly_tannakian}

 As mentioned in the introduction, the central aim of this note is to give an elementary proof of the fact that any two fiber functors of a Tannakian category are locally isomorphic. We start by explaining in more detail how this follows from the fact that the Deligne tensor product of two Tannakian categories is again Tannakian. As it turns out, the corresponding result for tensor products of \emph{weakly} Tannakian categories is simpler to prove. At the same time it provides a conceptual way to see this relationship. 
 
 Throughout this section, we fix a field $k$ (though most of what we say is true for an arbitrary commutative base ring). We call a finitely cocomplete $k$-linear category $\ca{A}$ \emph{ind-abelian} if $\Ind(\ca{A})$ is an abelian category. See also \cite[\S 2]{SCHAEPPI_INDABELIAN} for an alternative characterization of ind-abelian categories.
 
 \begin{dfn}\label{dfn:weakly_tannakian}
  Let $\ca{A}$ be an ind-abelian $k$-linear symmetric monoidal category whose tensor product is right exact in each variable. Then $\ca{A}$ is called \emph{weakly Tannakian} if the following hold:
  \begin{enumerate}
  \item[(i)] There exists a \emph{fiber functor} $w \colon \ca{A} \rightarrow \Mod^{\fp}_A$ for some commutative $k$-algebra $A$, that is, a faithful right exact symmetric strong monoidal functor whose extension to ind-objects is exact.
  \item[(ii)] The category $\ca{A}$ is generated by objects with duals: for every object $A \in \ca{A}$, there exists an object $A^{\prime} \in \ca{A}$ with a dual and an epimorphism $A^{\prime} \rightarrow A$.
  \end{enumerate}
 \end{dfn}

 Clearly every Tannakian category is weakly Tannakian. Other examples include the category of finitely presentable modules over a commutative $k$-algebra and the category of coherent sheaves of a noetherian quasi-projective scheme (here the object $A^{\prime}$ in (ii) above can even be taken to be a finite direct sum of line bundles). Any $\fpqc$-covering of the scheme by an affine scheme yields a fiber functor by taking the pullback of a coherent sheaf to the cover. In fact, this example can be suitably generalized to include \emph{all} weakly Tannakian categories. This is made precise in the following theorem. We call a quasi-compact semi-separated $\fpqc$-stack an \emph{Adams stack} if the vector bundles form a generator of its category of quasi-coherent sheaves.
 
 \begin{thm}\label{thm:tannaka_duality}
 The pseudofunctor which sends an Adams-stack $X$ to the symmetric monoidal category $\QCoh_{\fp}(X)$ of finitely presentable quasi-coherent sheaves on $X$ gives a contravariant biequivalence between the 2-category of Adams stacks and the 2-category of weakly Tannakian categories and right exact symmetric strong monoidal functors between them.
 \end{thm}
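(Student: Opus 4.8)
The plan is to exhibit a pseudo-inverse to the assignment $X \mapsto \QCoh_{\fp}(X)$ and to verify that it is an equivalence on objects, on $1$-cells, and on $2$-cells. First I would check that the given pseudofunctor really lands in weakly Tannakian categories. Since $X$ is quasi-compact, choose an affine $\fpqc$-cover $p \colon \Spec A \to X$. Then $p^{*} \colon \QCoh_{\fp}(X) \to \Mod^{\fp}_{A}$ is faithful by $\fpqc$-descent, strong symmetric monoidal and right exact, and its extension to $\Ind$-objects is exact because $p$ is flat; this is the fiber functor required by (i). Semi-separatedness guarantees that $\Spec A \times_{X} \Spec A$ is affine, so that descent data are comodules over a flat Hopf algebroid. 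Axiom (ii) holds because the vector bundles generate $\QCoh(X)$: any finitely presentable sheaf is a quotient of a finite direct sum of vector bundles, and vector bundles have duals.

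For the inverse, start from a weakly Tannakian category $\ca{A}$ with fiber functor $w \colon \ca{A} \to \Mod^{\fp}_{A}$. Faithful flatness of $w$ lets me encode the situation as a flat Hopf algebroid $(A, \Gamma)$, and the generation by dualizable objects in (ii) forces $\Gamma$ to be a filtered colimit of dualizable $\Gamma$-comodules, i.e.\ an \emph{Adams} Hopf algebroid. I then set $X \defl \left[ \Spec A / \Spec \Gamma \right]$, the stack associated to the groupoid $\Spec \Gamma \rightrightarrows \Spec A$. Since $\QCoh(X) \simeq \Comod_{\Gamma}$, the Adams condition on $\Gamma$ translates precisely into the statement that vector bundles generate $\QCoh(X)$, so $X$ is an Adams stack, and conversely the previous paragraph shows that every Adams stack arises in this way up to equivalence. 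The key point to establish here is the \emph{recognition equivalence}: the comparison functor $\ca{A} \to \Comod_{\Gamma}^{\fp}$ sending $a$ to $w(a)$ equipped with its canonical coaction is an equivalence of symmetric monoidal categories. This is the technical heart of the argument, and it is where one must use both the faithful flatness of $w$ and the generation hypothesis, together with the theory of $\Ind$-abelian categories, to control the finitely presentable comodules.

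It remains to show that the assignment is fully faithful on $1$- and $2$-cells. The crucial observation is a Tannaka reconstruction of the functor of points: for a commutative $k$-algebra $R$, the groupoid $\Hom(\Spec R, Y)$ of $R$-points of an Adams stack $Y$ is naturally equivalent to the groupoid of fiber functors $\QCoh_{\fp}(Y) \to \Mod^{\fp}_{R}$ and their monoidal natural isomorphisms. Granting this, a right exact symmetric strong monoidal functor $F \colon \QCoh_{\fp}(Y) \to \QCoh_{\fp}(X)$ induces, by composing the $R$-point fiber functors of $X$ with $F$, a natural transformation between the functors of points of $X$ and $Y$, hence a morphism of stacks $X \to Y$; monoidal natural isomorphisms correspond to $2$-cells. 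Verifying that this assignment is inverse, up to coherent isomorphism, to $f \mapsto f^{*}$ yields the biequivalence, the contravariance and pseudofunctoriality coherences then being routine.

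I expect the main obstacle to be the recognition equivalence of the second paragraph: characterizing intrinsically, in terms of the fiber functor and the generation axiom, exactly those objects of $\Comod_{\Gamma}$ that come from $\ca{A}$, and proving that the comparison functor is essentially surjective onto the finitely presentable comodules. The reconstruction of the functor of points in the third paragraph is closely related and rests on the same circle of ideas.
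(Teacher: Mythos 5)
Your outline follows essentially the same route as the paper's source for this theorem: the paper itself gives no argument beyond citing \cite[Theorem~1.6]{SCHAEPPI_INDABELIAN} and \cite[Theorem~1.3.3]{SCHAEPPI_STACKS}, and the strategy of those references is exactly what you describe --- apply Beck comonadicity to $\Ind(w) \colon \Ind(\ca{A}) \to \Mod_A$ to produce a flat Hopf algebroid $(A,\Gamma)$, use the generation-by-duals axiom to see that $\Gamma$ is an Adams Hopf algebroid, take the associated stack, and handle $1$- and $2$-cells via the identification of $R$-points with fiber functors. This also matches the sketch the paper gives in the paragraph following the theorem. Be aware, though, that the two steps you defer (the ``recognition equivalence'' $\ca{A} \simeq \Comod_{\Gamma}^{\fp}$ and the equivalence between the groupoid of $R$-points and the groupoid of fiber functors) are precisely the content of the cited theorems, so your text is an accurate roadmap rather than an independent proof; in particular, the passage from ``the dualizable comodules generate'' to ``$\Gamma$ is a filtered colimit of dualizable comodules'' is itself a nontrivial theorem, and the faithfulness and exactness of $\Ind(w)$ (rather than ``faithful flatness of $w$'') is what makes the comonadicity theorem applicable.
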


\begin{proof}
 This is proved in \cite[Theorem~1.6]{SCHAEPPI_INDABELIAN} and \cite[Theorem~1.3.3]{SCHAEPPI_STACKS}.
\end{proof} 
 
 The central ingredient of the proof of this theorem is the Beck (co)monadicity theorem, applied to the functor $\Ind(w) \colon \Ind(\ca{A}) \rightarrow \Mod_A$. One needs to observe that the induced comonad is given by a Hopf algebroid (the dual notion of an affine groupoid), which relies on the two facts that tensor functors preserve duals and that modules with duals are precisely the finitely generated projective ones, whence homming out of them preserves colimits.

 Recall that the \emph{Deligne tensor product} $\ca{A} \boxtimes \ca{B}$ of two abelian categories $\ca{A}$ and $\ca{B}$ is the universal abelian category with a $k$-linear functor
 \[
  \ca{A} \times \ca{B} \rightarrow \ca{A} \boxtimes \ca{B}
 \]
 of two variables which is right exact in each variable. This tensor product does unfortunately not always exist. However, if instead of abelian categories we merely consider additive categories with cokernels (equivalently, additive categories with finite colimits), we can still talk about right exact functors, and in this world, the corresponding tensor product always exists (see \cite[\S 6.5]{KELLY_BASIC}). We call the latter tensor product the \emph{Kelly tensor product}. It turns out that the Deligne tensor product of two abelian categories exists if and only if the Kelly tensor product happens to be an abelian category, so the Kelly tensor product is really a generalization the Deligne tensor product. The key result about weakly Tannakian categories that we will need is the following theorem.
 
 \begin{thm}\label{thm:kelly_tensor_weakly_tannakian}
 If $\ca{A}$ and $\ca{B}$ are weakly Tannakian categories with fiber functors $v \colon \ca{A} \rightarrow \Mod_A$ and $w \colon \ca{B} \rightarrow \Mod_B$, then the Kelly tensor product $\ca{A} \boxtimes \ca{B}$ is weakly Tannakian and
 \[
 v \boxtimes w \colon \ca{A} \boxtimes \ca{B} \rightarrow \Mod_A^{\fp} \boxtimes \Mod_B^{\fp} \simeq \Mod_{A\otimes B}^{\fp}
 \]
 is a fiber functor.
 \end{thm}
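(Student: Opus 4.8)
The plan is to verify, via the ind-completion, both clauses of Definition~\ref{dfn:weakly_tannakian} for $\ca{A} \boxtimes \ca{B}$ at once. The formal preliminaries come first. By the universal property of the Kelly tensor product, the symmetric monoidal structures of $\ca{A}$ and $\ca{B}$ induce one on $\ca{A} \boxtimes \ca{B}$ whose tensor product is right exact in each variable, and the bilinear right exact composite $\ca{A} \times \ca{B} \to \Mod^{\fp}_A \times \Mod^{\fp}_B \to \Mod^{\fp}_A \boxtimes \Mod^{\fp}_B$ induces the right exact functor $v \boxtimes w$; since $v$ and $w$ are symmetric strong monoidal, so is $v \boxtimes w$. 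I would also record the standard identifications $\Mod^{\fp}_A \boxtimes \Mod^{\fp}_B \simeq \Mod^{\fp}_{A \otimes_k B}$ and, on ind-completions, $\Ind(\ca{A} \boxtimes \ca{B}) \simeq \Ind(\ca{A}) \otimes \Ind(\ca{B})$ together with $\Ind(v \boxtimes w) \simeq \Ind(v) \otimes \Ind(w)$, where $\otimes$ denotes the tensor product of locally presentable $k$-linear categories; these reduce clause~(i) to a statement about a single cocontinuous functor between locally presentable abelian categories.

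The heart of the argument is the verification that $\Ind(v \boxtimes w)$ is exact and faithful. The idea is to use the comonadic description underlying Theorem~\ref{thm:tannaka_duality}: because $v$ and $w$ are fiber functors, Beck's theorem exhibits $\Ind(\ca{A})$ and $\Ind(\ca{B})$ as categories of comodules $\Comod_\Gamma(\Mod_A)$ and $\Comod_\Lambda(\Mod_B)$ over Hopf algebroids $(A, \Gamma)$ and $(B, \Lambda)$, with $\Ind(v)$ and $\Ind(w)$ the respective forgetful functors; their exactness is precisely the flatness of $\Gamma$ over $A$ and of $\Lambda$ over $B$. I would then identify the tensor product $\Ind(\ca{A}) \otimes \Ind(\ca{B})$ with the category of comodules over the external tensor product Hopf algebroid $(A \otimes_k B,\, \Gamma \otimes_k \Lambda)$, under which $\Ind(v \boxtimes w)$ becomes the forgetful functor to $\Mod_{A \otimes_k B}$.

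The main obstacle, and the only genuinely non-formal point, is that this forgetful functor is exact, which amounts to showing that $\Gamma \otimes_k \Lambda$ is flat over $A \otimes_k B$. This is where the hypothesis that $k$ is a field enters: writing the flat modules $\Gamma$ and $\Lambda$ as filtered colimits of finite free modules over $A$ and $B$ respectively, their tensor product over $k$ becomes a filtered colimit of finite free $A \otimes_k B$-modules, hence flat. Exactness of the forgetful functor then follows because the associated comonad $-\otimes_{A \otimes_k B}(\Gamma \otimes_k \Lambda)$ is exact, which also shows that the comodule category $\Ind(\ca{A} \boxtimes \ca{B})$ is abelian; faithfulness is automatic since any forgetful functor from a category of comodules is faithful. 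This simultaneously establishes that $\ca{A} \boxtimes \ca{B}$ is ind-abelian and that $v \boxtimes w$ satisfies clause~(i). The step I expect to require the most care is the identification of $\Ind(\ca{A}) \otimes \Ind(\ca{B})$ with comodules over $\Gamma \otimes_k \Lambda$ compatibly with the forgetful functors, i.e.\ checking that the tensor product of the two comonads is again given by tensoring with the external product coalgebroid.

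Finally, clause~(ii) is formal. The external product $a \boxtimes b$ of dualizable objects $a \in \ca{A}$ and $b \in \ca{B}$ is dualizable with dual $a^{\vee} \boxtimes b^{\vee}$, because the external product functor $\boxtimes \colon \ca{A} \times \ca{B} \to \ca{A} \boxtimes \ca{B}$ is symmetric strong monoidal for the external tensor products and strong monoidal functors preserve duals. Since the construction of the Kelly tensor product presents every object of $\ca{A} \boxtimes \ca{B}$ as the target of an epimorphism from a finite direct sum of external products $a_i \boxtimes b_i$, and since clause~(ii) for $\ca{A}$ and $\ca{B}$ lets us cover each $a_i$ and $b_i$ by dualizable objects, taking direct sums (which remain dualizable) produces an epimorphism onto any given object out of an object with a dual. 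This verifies clause~(ii) and completes the proof.
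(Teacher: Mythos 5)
The paper itself does not prove this theorem; it defers to \cite[Theorem~6.6]{SCHAEPPI_INDABELIAN}, so there is no in-text argument to compare against line by line. Your overall architecture --- comonadic descent of $\Ind(v)$ and $\Ind(w)$ to flat Hopf algebroids $(A,\Gamma)$ and $(B,\Lambda)$, followed by tensoring the Hopf algebroids --- is essentially the strategy of that reference (the paper also sketches a genuinely different route via the coreflectivity of weakly Tannakian categories, i.e.\ via the universal weakly Tannakian category $T(\ca{A})$ of \S\ref{section:universal_tannakian_categories}). The peripheral parts of your argument are fine: the induced symmetric monoidal structure on $\ca{A} \boxtimes \ca{B}$, the flatness of $\Gamma \otimes_k \Lambda$ over $A \otimes_k B$ via Lazard, the faithfulness of forgetful functors from comodules, and the verification of condition~(ii) by writing every object of the Kelly tensor product as a quotient of a finite direct sum of external products of dualizable objects.

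The genuine gap is the step you flag as delicate, and you misidentify where its difficulty lies. Checking that the tensor product of the two comonads is $- \ten{A \otimes B} (\Gamma \otimes_k \Lambda)$ is trivial; the real problem is showing that the Eilenberg--Moore construction commutes with $\boxtimes_c$ at all, i.e.\ that $\Ind(\ca{A}) \boxtimes_c \Ind(\ca{B}) \simeq \Comod_{\Gamma \otimes_k \Lambda}$. Eilenberg--Moore objects of comonads are weighted \emph{limits}, so they are not automatically preserved by the left biadjoint $- \boxtimes_c \ca{D}$. The trick the paper uses for monads in Proposition~\ref{prop:scalar_extension} --- Eilenberg--Moore objects coincide with Kleisli objects, hence are colimits and are preserved --- does not dualize: every algebra over a cocontinuous monad is a (reflexive co)colimit of free algebras, but a comodule is an \emph{equalizer} of cofree comodules, and cofree comodules do not generate $\Comod_\Gamma$ under colimits in general. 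What rescues the statement is precisely condition~(ii) of Definition~\ref{dfn:weakly_tannakian}: because the dualizable comodules generate (the ``Adams'' condition), one can present $\Comod_\Gamma$ by a colimit-type construction over its rigid subcategory (equivalently, prove it is a dualizable object of the 2-category of locally presentable categories and cocontinuous functors), and only then does $- \boxtimes_c \Comod_\Lambda$ preserve the relevant limit. So condition~(ii), which your proposal uses only for the formal duality argument at the end, is in fact the essential input for the central identification, and your proposal contains no argument for that identification beyond the (insufficient) comonad bookkeeping.
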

\begin{proof}
 This is proved in \cite[Theorem~6.6]{SCHAEPPI_INDABELIAN}.
\end{proof}
 
 If we combine these two theorems, we obtain the following important corollary.
 
\begin{cor}\label{cor:coherent_sheaves_on_products}
 For any two Adams stacks $X$ and $Y$, there is a canonical equivalence
 \[
 \QCoh_{\fp}(X \times Y) \simeq \QCoh_{\fp}(X) \boxtimes \QCoh_{\fp}(Y)
 \]
 of $k$-linear symmetric monoidal categories. In particular, $X$ has a faithfully flat diagonal if and only if the functor
 \[
 \QCoh_{\fp}(X) \boxtimes \QCoh_{\fp}(X) \rightarrow \QCoh_{\fp}(X)
 \]
 induced by the tensor product of quasi-coherent sheaves is faithful and exact.
\end{cor}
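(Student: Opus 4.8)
The plan is to derive the corollary formally from the two preceding theorems, the essential input being that the Kelly tensor product is the $2$-categorical coproduct in the $2$-category of weakly Tannakian categories and right exact symmetric strong monoidal functors. Write $\ca{A} = \QCoh_{\fp}(X)$ and $\ca{B} = \QCoh_{\fp}(Y)$. By Theorem~\ref{thm:kelly_tensor_weakly_tannakian} the Kelly tensor product $\ca{A} \boxtimes \ca{B}$ is again weakly Tannakian, so by the biequivalence of Theorem~\ref{thm:tannaka_duality} there is an essentially unique Adams stack $W$ with $\QCoh_{\fp}(W) \simeq \ca{A} \boxtimes \ca{B}$. The whole first assertion will follow once I identify $W$ with the product $X \times Y$.

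To carry this out I would compare corepresented pseudofunctors. The universal property of the Kelly tensor product, refined to the symmetric monoidal setting, exhibits $\ca{A} \boxtimes \ca{B}$ as the coproduct of $\ca{A}$ and $\ca{B}$: for every weakly Tannakian $\ca{C}$, restriction along the two canonical functors $\ca{A} \to \ca{A} \boxtimes \ca{B} \leftarrow \ca{B}$ gives an equivalence between right exact symmetric strong monoidal functors $\ca{A} \boxtimes \ca{B} \to \ca{C}$ and pairs of such functors out of $\ca{A}$ and $\ca{B}$. Transporting this equivalence through the contravariant biequivalence of Theorem~\ref{thm:tannaka_duality}, it says exactly that, for every Adams stack $Z$, maps $Z \to W$ correspond naturally to pairs of maps $(Z \to X,\, Z \to Y)$. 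Thus $W$, equipped with the two maps $W \to X$ and $W \to Y$ dual to the canonical inclusions, is a $2$-categorical product of $X$ and $Y$; in particular the product $X \times Y$ exists as an Adams stack and $W \simeq X \times Y$. Chasing the universal cocone through this identification shows that the resulting equivalence $\ca{A} \boxtimes \ca{B} \simeq \QCoh_{\fp}(X \times Y)$ is the one induced by the external tensor product $(\ca{F}, \ca{G}) \mapsto \pr_X^{\ast} \ca{F} \otimes \pr_Y^{\ast} \ca{G}$.

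For the second assertion I would apply the first to $Y = X$ and use that the projections satisfy $\pr_i \circ \Delta \simeq \id_X$ for the diagonal $\Delta \colon X \to X \times X$. Pulling back along $\Delta$ and precomposing with the equivalence $\QCoh_{\fp}(X) \boxtimes \QCoh_{\fp}(X) \simeq \QCoh_{\fp}(X \times X)$ sends $(\ca{F}, \ca{G})$ to $\Delta^{\ast}(\pr_1^{\ast} \ca{F} \otimes \pr_2^{\ast} \ca{G}) \cong \ca{F} \otimes \ca{G}$, so under this equivalence $\Delta^{\ast}$ becomes precisely the multiplication functor induced by the tensor product. It then remains to invoke the standard descent-theoretic fact that a morphism of Adams stacks is faithfully flat if and only if its pullback functor on finitely presentable quasi-coherent sheaves is faithful and exact—flatness being equivalent to (left) exactness of the pullback, faithful flatness to faithful exactness. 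Applying this to $\Delta$ and transporting along the equivalence gives the stated criterion.

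I expect the main obstacle to be the refinement used at the start of the second paragraph: upgrading the defining universal property of the Kelly tensor product, which is phrased in terms of functors right exact in each variable, to the statement that $\ca{A} \boxtimes \ca{B}$ is the coproduct for right exact \emph{symmetric strong monoidal} functors, and verifying that all the comparison $1$- and $2$-cells remain inside the $2$-category of weakly Tannakian categories. Once this coproduct description is established, the identification $W \simeq X \times Y$ and the criterion for the diagonal are purely formal consequences of the biequivalence.
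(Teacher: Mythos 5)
Your argument is correct and is essentially the paper's: both identify the two categories as representing the same universal object (the coproduct of weakly Tannakian categories, equivalently the product of Adams stacks) under the biequivalence of Theorem~\ref{thm:tannaka_duality} combined with Theorem~\ref{thm:kelly_tensor_weakly_tannakian}, and both reduce the second claim to the observation that the multiplication functor corresponds to $\Delta^{\ast}$ under this equivalence. The one point the paper makes explicit and you elide is that the product of two Adams stacks, computed among $\fpqc$-stacks, is again an Adams stack: your universal-property argument only produces the product \emph{within} the full sub-2-category of Adams stacks, and this extra fact is needed to identify that product with $X \times Y$.
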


\begin{proof}
 A product of Adams stacks is again an Adams stacks, and the Kelly tensor product of two symmetric monoidal categories whose tensor product is right exact in each variable provides a coproduct among such categories. Combining this with Theorems~\ref{thm:tannaka_duality} and \ref{thm:kelly_tensor_weakly_tannakian}, we find that both sides represent the coproduct of weakly Tannakian categories, so they must be equivalent. A more detailed proof of this can be found in \cite[Theorem~1.7]{SCHAEPPI_INDABELIAN}.
 
 The second claim follows from the first since the functor induced by the tensor product corresponds to the diagonal of the stack under Tannaka duality.
\end{proof}

 This corollary shows in particular that an Adams stack $X$ is a gerbe if the category $\QCoh_{\fp}(X) \boxtimes \QCoh_{\fp}(X)$ is Tannakian. Indeed, it is well-known that any right exact symmetric strong monoidal $k$-linear functor whose domain is Tannakian and whose target is (weakly) Tannakian is automatically faithful and exact: such a stack $X$ is clearly non-empty and the evident cartesian square shows that any two points of a stack with faithfully flat diagonal are $\fpqc$-locally isomorphic. Another consequence of Theorem~\ref{thm:kelly_tensor_weakly_tannakian} is the desired reduction of the local uniqueness question to scalar extension. For later use, prove the following lemma separately.
 
\begin{lemma}\label{lemma:rigid_abelian}
 Any rigid ind-abelian category is abelian. In particular, a weakly Tannakian category is Tannakian if and only if it is rigid and the endomorphism ring of the unit object is a field.
\end{lemma}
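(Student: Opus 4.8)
The plan is to prove the first sentence---that a rigid ind-abelian category $\ca{A}$ is abelian---by exhibiting $\ca{A}$ as a full subcategory of the abelian category $\Ind(\ca{A})$ which is closed under kernels and cokernels, since such a subcategory inherits an abelian structure. Recall that $\ca{A}$ is $k$-linear and finitely cocomplete, hence additive, and that the canonical inclusion $\ca{A} \hookrightarrow \Ind(\ca{A})$ is fully faithful and preserves finite colimits. In particular the cokernel in $\Ind(\ca{A})$ of a morphism between objects of $\ca{A}$ already lies in $\ca{A}$, so $\ca{A}$ is closed under cokernels.

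The only real work is to show closure under kernels, and this is where rigidity enters. Since $\ca{A}$ is rigid, the duality functor $(-)^{\vee} \colon \ca{A}^{\op} \rightarrow \ca{A}$ is an anti-equivalence, and as the inclusion into $\Ind(\ca{A})$ is strong symmetric monoidal it preserves duals; thus every object of $\ca{A}$ remains dualizable in $\Ind(\ca{A})$ with the same dual. An anti-equivalence interchanges limits and colimits, so for a morphism $f \colon X \rightarrow Y$ in $\ca{A}$ the dual of the cokernel of $f^{\vee} \colon \ldual{Y} \rightarrow \ldual{X}$ computes the kernel of $f$; concretely $\ker(f) \cong \bigl(\operatorname{coker}(f^{\vee})\bigr)^{\vee}$, with the cokernel formed in $\ca{A}$ (equivalently in $\Ind(\ca{A})$, since these agree) and the dual taken in $\ca{A}$. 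Hence $\ker(f)$ lies in $\ca{A}$ and is computed there exactly as in $\Ind(\ca{A})$.

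Given this, I would invoke the following routine fact: a full additive subcategory of an abelian category that is closed under kernels and cokernels is itself abelian. The point is that monomorphisms and epimorphisms in $\ca{A}$ coincide with those in $\Ind(\ca{A})$---a mono $f$ in $\ca{A}$ has $\ker f \in \ca{A}$ mapping to $X$ by a mono which must vanish, forcing $\ker f = 0$, and dually for epis---so that the factorization of any mono as the kernel of its cokernel, valid in $\Ind(\ca{A})$, takes place inside $\ca{A}$, and dually every epi is a cokernel. Thus $\ca{A}$ is abelian.

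For the ``in particular'' clause, the forward implication is immediate: a Tannakian category is by definition rigid with $\End(\mathbf{1})$ a field. Conversely, suppose a weakly Tannakian category $\ca{A}$ is rigid with $\End(\mathbf{1})$ a field. By the first part $\ca{A}$ is abelian, and since the fiber functor $w \colon \ca{A} \rightarrow \Mod^{\fp}_A$ is strong monoidal it sends the now dualizable objects of $\ca{A}$ to dualizable, i.e.\ finitely generated projective, $A$-modules, so $w$ factors through vector bundles. Together with the hypothesis that $\End(\mathbf{1})$ is a field this matches the defining data of a Tannakian category, and I would finish by unwinding the definitions to confirm the remaining axioms. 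I expect the main obstacle of the whole argument to be the bookkeeping in the second paragraph: verifying that duals, kernels, and cokernels computed in $\ca{A}$ genuinely agree with those computed in the ambient abelian category $\Ind(\ca{A})$, which is precisely what makes the comparison of exactness legitimate.
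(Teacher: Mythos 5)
Your first paragraph and the kernel argument are essentially the paper's own proof of the first sentence, just written out in more detail: rigidity supplies kernels as duals of cokernels, $\ker(f) \cong \bigl(\operatorname{coker}(f^{\vee})\bigr)^{\vee}$, and since the embedding $\ca{A} \rightarrow \Ind(\ca{A})$ preserves these kernels (and finite colimits), $\ca{A}$ inherits the abelian axioms from $\Ind(\ca{A})$. That part is fine.

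The gap is in the converse of the ``in particular'' clause. Knowing that $w$ lands in finitely generated projective $A$-modules for some commutative $k$-algebra $A$ does not yet ``match the defining data of a Tannakian category'': the definition in force here requires a faithful exact tensor functor with values in $\Vect_K^{\fd}$ for some \emph{field} $K$, and producing one is the actual content of this half of the lemma, which your ``unwinding the definitions'' elides. The paper's argument is: since $\End(O)$ is a field and $\ca{A}$ is rigid abelian, the unit object is simple; hence by \cite[Corollaire~2.10]{DELIGNE} \emph{any} right exact tensor functor from $\ca{A}$ to a nonzero target is automatically faithful and exact; now $A \neq 0$ (by faithfulness of $w$), so there is a surjection $A \rightarrow K$ onto a field, and the composite $\ca{A} \rightarrow \Mod_A^{\fp} \rightarrow \Vect_K^{\fd}$ is the required field-valued fiber functor. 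Without the simplicity of the unit and Deligne's automatic-exactness result you have no reason for the base-changed functor $K \otimes_A w(-)$ to remain faithful or exact, so this step cannot be dismissed as bookkeeping. You should add this construction (and the observation that $\End(O)$ being a field forces the unit to be simple in the rigid abelian setting) to complete the proof.
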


\begin{proof}
 Any rigid category with cokernels also has kernels, and any ind-abelian category with kernels must be abelian since the embedding into ind-objects preserves kernels.
 
 If  $\ca{A}$ is also weakly Tannakian and the endomorphism ring of the unit is a field, then the unit object is simple, hence any right exact functor $\ca{A} \rightarrow \Vect_K$ is faithful and exact (see \cite[Corollaire~2.10]{DELIGNE}), so we get the desired fiber functor with values in $K$-vector spaces by composing the fiber functor $w \colon \ca{A} \rightarrow \Mod_A$ (which exists by Definition~\ref{dfn:weakly_tannakian}) with the tensor functor induced by any epimorphism $A \rightarrow K$. To see that such an epimorphism exists, we only need to observe that $A \neq 0$, which follows from the fact that $w$ is faithful.
\end{proof}
 
 \begin{cor}\label{cor:scalar_to_uniqueness}
 Let $\ca{T}$ and $\ca{T}^{\prime}$ be Tannakian categories over $k$ with fiber functors $w \colon \ca{T} \rightarrow \Vect_K^{\fd}$ and $w^{\prime} \colon \ca{T}^{\prime} \rightarrow \Vect_{K^{\prime}}^{\fd}$. If the scalar extension $\ca{T} \boxtimes \Vect^{\fd}_{K^{\prime}}$ of $\ca{T}$ from $k$ to $K^{\prime}$ is a Tannakian category over $K^{\prime}$, then the tensor product $\ca{T} \boxtimes \ca{T}^{\prime}$ is a Tannakian category over $k$. In particular, if the scalar extension of $\ca{T}$ from $k$ to $K$ is Tannakian over $K$, then any two fiber functors on $\ca{T}$ are $\fpqc$-locally isomorphic.
 \end{cor}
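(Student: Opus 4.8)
The plan is to show that $\ca T\boxtimes\ca T'$ is Tannakian and then read off the local uniqueness statement from Corollary~\ref{cor:coherent_sheaves_on_products}. By Theorem~\ref{thm:kelly_tensor_weakly_tannakian} the Kelly tensor product $\ca T\boxtimes\ca T'$ is already weakly Tannakian, with fiber functor
\[
w\boxtimes w'\colon \ca T\boxtimes\ca T'\longrightarrow \Vect_K^{\fd}\boxtimes\Vect_{K'}^{\fd}\simeq \Mod^{\fp}_{K\otimes_k K'},
\]
so by Lemma~\ref{lemma:rigid_abelian} it remains only to check that $\ca T\boxtimes\ca T'$ is rigid and that the endomorphism ring of its unit object is a field. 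Both properties I would detect through the intermediate scalar extension $\ca T\boxtimes\Vect_{K'}^{\fd}$, which is Tannakian by hypothesis.

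The device is to factor the fiber functor as
\[
\ca T\boxtimes\ca T'\xrightarrow{\ \id\boxtimes w'\ }\ca T\boxtimes\Vect_{K'}^{\fd}\xrightarrow{\ w\boxtimes\id\ }\Mod^{\fp}_{K\otimes_k K'}.
\]
Since the domain $\ca T\boxtimes\Vect_{K'}^{\fd}$ of $w\boxtimes\id$ is Tannakian and its target is weakly Tannakian, the functor $w\boxtimes\id$ is automatically faithful and exact by the fact recalled just before Lemma~\ref{lemma:rigid_abelian}. I would then deduce that $\id\boxtimes w'$ is itself faithful and exact: faithfulness is immediate because the composite $w\boxtimes w'$ is faithful, and for exactness one uses that a faithful exact functor reflects exact sequences, so exactness of the composite $w\boxtimes w'$ forces exactness of $\id\boxtimes w'$.

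Now $\id\boxtimes w'$ is a faithful exact symmetric strong monoidal functor into the Tannakian category $\ca T\boxtimes\Vect_{K'}^{\fd}$, and I would invoke the property underlying Grothendieck's strategy recalled in the introduction, namely that such a functor detects objects with duals and detects simple objects. Every object of $\ca T\boxtimes\Vect_{K'}^{\fd}$ has a dual, hence so does every object of $\ca T\boxtimes\ca T'$, i.e.\ $\ca T\boxtimes\ca T'$ is rigid; and the unit of $\ca T\boxtimes\ca T'$ is sent to the simple unit of the target, hence is simple, so its (commutative) endomorphism ring is a division ring, i.e.\ a field. Lemma~\ref{lemma:rigid_abelian} then shows that $\ca T\boxtimes\ca T'$ is Tannakian over $k$.

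For the final assertion I would specialize to $\ca T'=\ca T$, $w'=w$, $K'=K$, so that the hypothesis gives that $\ca T\boxtimes\ca T$ is Tannakian. Writing $\ca T\simeq\QCoh_{\fp}(X)$ for the Adams stack $X$ of Theorem~\ref{thm:tannaka_duality}, the multiplication functor $\ca T\boxtimes\ca T\to\ca T$ has Tannakian domain and weakly Tannakian target, hence is faithful and exact by the same recalled fact; Corollary~\ref{cor:coherent_sheaves_on_products} then yields that $X$ has faithfully flat diagonal, which, as explained in the introduction, is exactly the statement that any two fiber functors of $\ca T$ (the points of $X$) are $\fpqc$-locally isomorphic. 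The step I expect to be the real obstacle is the detection of \emph{dualizable} objects by $\id\boxtimes w'$: reflecting exactness and simplicity is a routine diagram chase, but reflecting dualizability genuinely uses the tensor structure (the comparison between internal homs and duals), and this is the point where I would rely on the introductory discussion and Deligne's treatment to supply the argument.
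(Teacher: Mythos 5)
Your proof is correct and follows the paper's strategy almost exactly: both arguments factor the fiber functor through $\ca{T}\boxtimes\Vect_{K'}^{\fd}$, deduce that $\id\boxtimes w'$ is faithful and exact by cancellation against the faithful exact $w\boxtimes\id$, and conclude rigidity of $\ca{T}\boxtimes\ca{T}'$ because the faithful exact strong monoidal functor $\id\boxtimes w'$ detects duals --- a step the paper also simply asserts, so your caution about it at the end is reasonable but does not put you behind the paper's own level of detail. The one place you genuinely diverge is the endomorphism ring of the unit: you reflect simplicity of the unit along $\id\boxtimes w'$ and invoke Eckmann--Hilton to conclude that its endomorphism ring is a commutative division ring, hence a field. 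The paper instead observes that, since every module over a field is flat, the canonical functor $\ca{T}\otimes\ca{T}'\to\ca{T}\boxtimes\ca{T}'$ is full and faithful, so the endomorphism ring is canonically isomorphic to $k$ itself. Your route suffices to make $\ca{T}\boxtimes\ca{T}'$ Tannakian (which is all the second claim needs, since only faithfulness and exactness of the multiplication functor enter), but it a priori exhibits the endomorphism ring only as some field extension of $k$, whereas the statement asserts Tannakian \emph{over $k$}; the paper's flatness argument is what pins the base field down. Your handling of the second claim --- specializing to $\ca{T}'=\ca{T}$, $K'=K$ and reading off faithful flatness of the diagonal via Corollary~\ref{cor:coherent_sheaves_on_products} --- matches the paper.
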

 
 \begin{proof}
 We can apply Theorem~\ref{thm:kelly_tensor_weakly_tannakian} to the pair $(\ca{T},w)$ and $(\ca{T}^{\prime},w^{\prime})$ of Tannakian categories and the pair $(\ca{T},w)$ and $(\Vect_{K^{\prime}},\id)$ of weakly Tannakian categories. Thus both $w \boxtimes w^{\prime}$ and $w \boxtimes \id$ are symmetric strong monoidal functors whose extension to ind-objects are faithful and exact. It follows that
 \[
\Ind(\id \boxtimes w^{\prime}) \colon \Ind(\ca{T} \boxtimes \ca{T}^{\prime}) \rightarrow \Ind(\ca{T} \boxtimes \Vect^{\fd}_{K^{\prime}})
 \]
 is also faithful and exact. Since this functor is strong monoidal, it detects objects with duals. The assumption that $\ca{T} \boxtimes \Vect^{\fd}_{K^{\prime}}$ is rigid therefore implies that $\ca{T} \boxtimes \ca{T}^{\prime}$ is rigid as well. By Lemma~\ref{lemma:rigid_abelian}, it only remains to check that the endomorphism ring of the unit object is a field. Since any module over a field is flat, it follows that the canonical functor
 \[
 \ca{T} \otimes \ca{T}^{\prime} \rightarrow \ca{T} \boxtimes \ca{T}^{\prime}
 \]
 of $k$-linear categories is full and faithful. Thus the endomorphism ring of the unit object of $\ca{T} \boxtimes \ca{T}^{\prime}$ is canonically isomorphic to $k$. This shows that $\ca{T} \boxtimes \ca{T}^{\prime}$ is indeed Tannakian.
 
 The second claim follows from the first and the above discussion of Corollary~\ref{cor:coherent_sheaves_on_products}.
 \end{proof}
 
 From now on, we will only rely on the above corollary to give the desired proof of uniqueness of fiber functors. Note that we have not used the full strength of Theorems~\ref{thm:tannaka_duality} and \ref{thm:kelly_tensor_weakly_tannakian} in its proof. It should thus be possible to convince oneself of the truth of the above corollary by more elementary means. Note, however, that the use of the more general theory makes it possible to avoid certain technical questions. For example, in the above proof, the existence of the Deligne tensor product of two Tannakian categories follows directly from the much simpler existence of the Kelly tensor product.
 
 The reader enticed by the powers of abstract machinery might be interested in the following less elementary, but perhaps more enlightening, proof of Theorem~\ref{thm:kelly_tensor_weakly_tannakian}. By analyzing the existence question for fiber functors on a finitely cocomplete symmetric monoidal $k$-linear category $\ca{A}$ in some detail, one can show that, for any such $\ca{A}$, there exists a \emph{universal} weakly Tannakian category $T(\ca{A})$ equipped with a right exact symmetric strong monoidal $k$-linear functor to $\ca{A}$. In other words, weakly Tannakian categories form a (bicategorically) coreflective subcategory of finitely cocomplete symmetric monoidal $k$-linear categories. Thus the subcategory of weakly Tannakian categories is closed under all colimits, in particular binary coproducts. That the binary coproduct is given by the Kelly tensor product is proved in much the same way as the well-known fact that the coproduct of two commutative rings is given by their tensor product. This is discussed in detail in \cite[\S 5]{SCHAEPPI_COLIMITS}. We will return to this viewpoint in \S \ref{section:universal_tannakian_categories}.

\subsection{Scalar extension of linear categories}\label{section:scalar_extension_of_linear_cats}

 In the above section, we have shown that uniqueness of fibre functors follows from the fact that the scalar extension of a Tannakian category is again Tannakian (see Corollary~\ref{cor:scalar_to_uniqueness}). We therefore need a convenient construction of the scalar extension $\Vect_K^{\fd} \boxtimes \ca{A}$ of a finitely cocomplete $k$-linear category $\ca{A}$. This extension is very simple to describe if the field extension $K$ is \emph{finite}. Namely, the external tensor product $K \odot - \colon \ca{A} \rightarrow \ca{A}$ gives a monad whose category of modules is equivalent to $\Vect_K^{\fd} \boxtimes \ca{A}$. In order to get a similar description for infinite field extensions, we need to pass to ind-objects. Recall that an object $C$ of a category $\ca{C}$ is called \emph{finitely presentable} if the hom-functor $\ca{C}(C,-)$ commutes with filtered colimits. We denote the full subcategory of finitely presentable objects by $\ca{C}_{\fp}$.
 
\begin{prop}\label{prop:scalar_extension}
 Let $\ca{A}$ be a finitely cocomplete $k$-linear category and let $K$ be an arbitrary field extension of $k$. Let $\Ind(\ca{A})_K$ be the category of modules of the monad
 \[
 K \odot - \colon \Ind(\ca{A}) \rightarrow \Ind(\ca{A})
 \]
 on the category of ind-objects of $\ca{A}$. Then there is an equivalence
 \[
  \Vect_K^{\fd} \boxtimes \ca{A} \simeq \bigl( \Ind(\ca{A})_K \bigr)_{\fp}
 \]
 of $K$-linear categories which is natural in $\ca{A}$.
\end{prop}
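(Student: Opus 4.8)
The plan is to establish the equivalence by exhibiting $\bigl(\Ind(\ca{A})_K\bigr)_{\fp}$ as a solution to the universal property defining the Kelly tensor product $\Vect_K^{\fd} \boxtimes \ca{A}$. Recall that this tensor product is the universal finitely cocomplete $k$-linear category receiving a $k$-linear functor of two variables from $\Vect_K^{\fd} \times \ca{A}$ that is right exact in each argument. First I would verify that the monad $K \odot -$ on $\Ind(\ca{A})$ is well-behaved: since $K = \colim_i V_i$ is a filtered colimit of its finite-dimensional $k$-subspaces, the external tensor product $K \odot -$ preserves filtered colimits, so the Eilenberg--Moore category $\Ind(\ca{A})_K$ is again locally finitely presentable and the forgetful functor creates filtered colimits. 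Consequently the finitely presentable objects are, up to retracts, the free modules $K \odot A$ on compact objects $A \in \ca{A} \subseteq \Ind(\ca{A})$, and $\bigl(\Ind(\ca{A})_K\bigr)_{\fp}$ is finitely cocomplete.

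Next I would construct the comparison functor. There is an evident $k$-bilinear, separately right exact functor $\Vect_K^{\fd} \times \ca{A} \to \bigl(\Ind(\ca{A})_K\bigr)_{\fp}$ sending $(V, A)$ to the free $K$-module $V \ten{k} K \odot A$ carrying its canonical $K$-action; right exactness in the $\ca{A}$-variable follows because $K \odot -$ and the free functor are left adjoints, and right exactness (indeed $K$-linearity) in the $\Vect_K^{\fd}$-variable is immediate. By the universal property this induces a canonical right exact $k$-linear functor
\[
 \Phi \colon \Vect_K^{\fd} \boxtimes \ca{A} \to \bigl(\Ind(\ca{A})_K\bigr)_{\fp},
\]
and the task reduces to checking that $\Phi$ is an equivalence. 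I would do this by producing an inverse-up-to-isomorphism, or equivalently by showing $\Phi$ is fully faithful and essentially surjective. Essential surjectivity follows from the identification of the compact objects above as retracts of free modules $K \odot A$, each of which is manifestly in the image. Full faithfulness is where I expect the real work to lie: one must compute hom-sets on both sides and match them. On the target, $\bigl(\Ind(\ca{A})_K\bigr)\bigl(K \odot A, K \odot B\bigr) \cong \Ind(\ca{A})\bigl(A, K \odot B\bigr) \cong K \ten{k} \Ind(\ca{A})(A,B)$ by the free--forgetful adjunction together with compactness of $A$ and the filtered-colimit description of $K \odot -$; this should match the corresponding hom-space in $\Vect_K^{\fd} \boxtimes \ca{A}$ computed via Kelly's explicit construction of the tensor product as a category of right exact functors.

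The main obstacle will be organizing the full-faithfulness computation so that the two hom-space descriptions are compared \emph{naturally} and compatibly with the $K$-linear structure, rather than just abstractly isomorphic for each pair of generating objects. Concretely, Kelly's tensor product is built from a presentation by generators and relations (or as a full subcategory of a functor category), so one must check that $\Phi$ respects not only the generating objects but the cokernel relations imposed, which amounts to verifying that both constructions impose exactly the same right exactness relations. I would handle this by noting that both sides satisfy the same universal property among $K$-linear finitely cocomplete categories: for the target this is the statement that modules over the idempotent-complete monad $K \odot -$, restricted to compact objects, corepresent separately-right-exact $K$-bilinear functors out of $\Vect_K^{\fd} \times \ca{A}$, and the naturality in $\ca{A}$ then follows formally since every construction in sight ($\Ind$, the monad $K \odot -$, taking modules, restricting to $\fp$-objects) is functorial in right exact $k$-linear functors of $\ca{A}$.
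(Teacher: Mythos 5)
Your overall strategy (induce a comparison functor from the universal property of the Kelly tensor product and show it is an equivalence) is legitimate, but the proof has a genuine gap at exactly the point you flag: full faithfulness. On the target you correctly compute $\Ind(\ca{A})_K(K\odot A,K\odot B)\cong K\ten{k}\Ind(\ca{A})(A,B)$, but on the source you would need to show that the canonical map $\Vect_K^{\fd}(V,V')\ten{k}\ca{A}(A,A')\to\bigl(\Vect_K^{\fd}\boxtimes\ca{A}\bigr)(V\boxtimes A,V'\boxtimes A')$ is an isomorphism, and then that full faithfulness on generators propagates along finite colimits; your closing appeal to ``both sides satisfy the same universal property'' is precisely the statement to be proved, not an argument for it. There are also two concrete errors. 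First, the bilinear functor must send $(V,A)$ to $V\ten{K}(K\odot A)\cong V\odot A$ with the $K$-action coming from $V$, \emph{not} to a free module built from the underlying $k$-vector space of $V$: as written, $(K,A)$ is sent to something with underlying ind-object $(K\ten{k}K)\odot A$, which is not even finitely presentable in $\Ind(\ca{A})_K$ when $[K:k]=\infty$, so your functor neither lands in $\bigl(\Ind(\ca{A})_K\bigr)_{\fp}$ nor induces an equivalence. Second, the finitely presentable objects of $\Ind(\ca{A})_K$ are not in general retracts of free modules $K\odot A$ with $A\in\ca{A}$; they are the \emph{finite colimits} of such. For instance, with $\ca{A}=\Mod^{\fp}_{k[x]}$ and $K=k(t)$ one has $\Ind(\ca{A})_K\simeq\Mod_{k(t)[x]}$, and the torsion module $k(t)[x]/(x-t)$ is finitely presentable but is a retract of no extended module $K\ten{k}M$. (This second point does not sink essential surjectivity, since a fully faithful right exact functor has essential image closed under finite colimits, but it must be stated and used correctly.)

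For comparison, the paper avoids all hom-set computations by a purely formal argument: passage to ind-objects carries the Kelly tensor product $\boxtimes$ to its cocomplete variant $\boxtimes_c$, the category $\Vect_K$ is the Eilenberg--Moore (equivalently, Kleisli) object of the cocontinuous monad $K\ten{k}-$ on the unit $\Vect_k$, and Kleisli objects are preserved by the left biadjoint $-\boxtimes_c\Ind(\ca{A})$; restricting to finitely presentable objects then yields the stated equivalence, naturally in $\ca{A}$. If you want to complete your more hands-on route, the missing ingredient is the full faithfulness on generators, which can be obtained from flatness of $k$-vector spaces as in the proof of Corollary~\ref{cor:scalar_to_uniqueness}, followed by an extension along finite colimits.
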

 
 \begin{proof}
 The Kelly tensor product can be defined relative to any class of colimit shapes. We write $-\boxtimes_c -$ for the Kelly tensor product relative to all small colimits. From a direct inspection of the respective universal properties it follows that passage to ind-objects is compatible with the two tensor products. Explicitly, given two finitely cocomplete $k$-linear categories $\ca{A}$ and $\ca{B}$, there is an equivalence
 \[
 \Ind(\ca{A} \boxtimes \ca{B}) \simeq \Ind(\ca{A}) \boxtimes_c \Ind(\ca{B})
 \]
 of $k$-linear categories which is natural in both $\ca{A}$ and $\ca{B}$.
 
 The equivalence $\ca{A} \simeq \Ind(\ca{A})_{\fp}$ therefore reduces the problem to checking that $\Vect_K \boxtimes_c \ca{C} \simeq \ca{C}_K$ for any cocomplete $k$-linear category $\ca{C}$. This follows as in the case of finitely cocomplete categories and finite field extensions, either by directly checking the universal properties, or from the following argument.
 
 The category $\Vect_K$ is the category of modules of the cocontinuous monad $K \otimes - \colon \Vect_k \rightarrow \Vect_k$. In the 2-category of cocomplete $k$-linear categories and cocontinuous functors, Eilenberg-Moore objects coincide with Kleisli objects, hence they are preserved by any left biadjoint such as $- \boxtimes_c \ca{C}$. Thus the canonical functor
 \[
 \ca{C}_{K} \simeq (\Vect_k \boxtimes_c \ca{C})_{(K \otimes -)\boxtimes \ca{C}} \rightarrow (\Vect_k)_K \boxtimes_c \ca{C} 
 \]
 gives the desired natural equivalence.
 \end{proof}

 Recall that giving an object of a finitely cocomplete $k$-linear category $\ca{A}$ amounts to the same as giving a right exact functor $\Vect_k \rightarrow \ca{A}$. More precisely, evaluation at the one-dimensional vector space gives an equivalence
 \[
 \Rex(\Vect_k, \ca{A}) \rightarrow \ca{A}
 \]
 of $k$-linear categories. Thus it makes sense to speak of the \emph{scalar extension of an object}. Namely, the scalar extension of $A \in \ca{A}$ is the object corresponding to $K$-linear functor
 \[
 \Vect_K^{\fd} \boxtimes F \colon \Vect^{\fd}_K \simeq \Vect_K^{\fd} \boxtimes \Vect_k^{\fd} \rightarrow \Vect_K^{\fd} \boxtimes \ca{A} \smash{\rlap{,}}
 \]
 where $F$ denotes the essentially unique right exact functor $\Vect_k \rightarrow \ca{A}$ for which there exists an isomorphism $F(k) \cong A$. 
 
 The following result shows that scalar extensions of certain simple objects are again simple. Note that this is valid independent of the existence of any tensor structure on the category.
 
 \begin{prop}\label{prop:scalar_extension_of_simple_object}
 Let $S \in \ca{A}$ be a simple object such that the canonical morphism $k \rightarrow \ca{A}(S,S)$ is an isomorphism.  Let $K$ be a field extension of $k$. Then the base change of $S$ is a simple object with endomorphism ring isomorphic to $K$.
 \end{prop}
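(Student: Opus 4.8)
The plan is to compute directly in the explicit model $\Vect_K^{\fd}\boxtimes\ca{A}\simeq(\Ind(\ca{A})_K)_{\fp}$ of Proposition~\ref{prop:scalar_extension}. Unwinding the description of base change given above, the base change $S_K$ of $S$ corresponds to the free module $K\odot S$ over the monad $K\odot-$, regarded as an object of $(\Ind(\ca{A})_K)_{\fp}\subseteq\Ind(\ca{A})_K$. I would use throughout that the forgetful functor $U\colon\Ind(\ca{A})_K\to\Ind(\ca{A})$ is faithful and creates all colimits (since $K\odot-$ preserves them), hence is exact; consequently $U$ reflects both zero objects and isomorphisms. Since $S$ lies in $\ca{A}$, it is a finitely presentable (compact) object of $\Ind(\ca{A})$, and choosing a $k$-basis $B$ of $K$ gives $U(S_K)=K\odot S\cong\bigoplus_{b\in B}S$, an object built entirely out of copies of the simple object $S$.

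For the endomorphism ring, I would use the free--forgetful adjunction to write $\End_{\Ind(\ca{A})_K}(S_K)\cong\Ind(\ca{A})(S,U(S_K))=\Ind(\ca{A})(S,K\odot S)$. Writing $K$ as the filtered colimit of its finite-dimensional subspaces $V\le K$ and using compactness of $S$ together with $\ca{A}(S,S)\cong k$, this becomes $\colim_V\bigl(V\otimes_k\ca{A}(S,S)\bigr)\cong\colim_V V\cong K$. A short check shows that composition of endomorphisms corresponds to multiplication in $K$ (this is just the algebra structure of the monad $K\odot-$ evaluated at the ``rank-one'' object $S$), so the isomorphism $\End(S_K)\cong K$ is one of $k$-algebras, as required.

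For simplicity, the key step is the classification of subobjects of the isotypic object $K\odot S$ inside $\Ind(\ca{A})$. Simplicity is to be understood in $\Ind(\ca{A})$, which is a Grothendieck abelian category, and the functor $-\odot S\colon\Vect_k\to\Ind(\ca{A})$ is exact (every short exact sequence of $k$-vector spaces splits) and fully faithful (again using compactness of $S$ and $\ca{A}(S,S)\cong k$), with essential image the $S$-isotypic semisimple objects. Standard semisimplicity then gives that the subobjects of $K\odot S$ are precisely the $W\odot S$ for $k$-subspaces $W\le K$, via $W\hookrightarrow K$. Now let $M\subseteq S_K$ be a nonzero subobject in $\Ind(\ca{A})_K$. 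Applying the exact faithful functor $U$ yields a nonzero subobject $U(M)=W\odot S\subseteq K\odot S$ with $W\ne 0$, and the condition that $M$ be a $K$-submodule translates, under the fully faithful exact functor $-\odot S$, into the requirement that the multiplication map $K\otimes_k W\to K$ land in $W$, i.e.\ that $W$ be a $K$-submodule of $K$. As $K$ is a field, its only submodules are $0$ and $K$, so $W=K$; hence $U(M)\to U(S_K)$ is an isomorphism, and since $U$ reflects isomorphisms, $M=S_K$. Together with $S_K\ne 0$ (as $U(S_K)\ne 0$), this shows that $S_K$ is simple.

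The main obstacle I anticipate lies in making the two translations through $-\odot S$ fully rigorous: first, the semisimplicity statement that every subobject of $\bigoplus_{b\in B}S$ has the form $W\odot S$, which is where the Grothendieck structure of $\Ind(\ca{A})$ and the hypothesis $\ca{A}(S,S)\cong k$ are essential; and second, the identification of the $K$-module structure on such a subobject with an honest submodule $W\subseteq K$ of the field $K$. Once these are in place, the observation that a field has only the trivial submodules delivers simplicity at once, while the endomorphism-ring claim reduces to the routine adjunction computation above.
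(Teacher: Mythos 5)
Your argument is correct and follows essentially the same route as the paper's own proof: both work in the model of Proposition~\ref{prop:scalar_extension}, identify $U(S_K)$ with a direct sum of copies of $S$, use semisimplicity of subobjects in a Grothendieck category together with the equivalence induced by $-\odot S$ to identify $K$-submodules of $S_K$ with ideals of the field $K$, and compute the endomorphism ring via the free--forgetful adjunction. Your write-up merely fills in a few details the paper leaves implicit (compactness of $S$ for the colimit computation, and that composition corresponds to multiplication in $K$).
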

 
 \begin{proof}
 Under the equivalence of Proposition~\ref{prop:scalar_extension}, the scalar extension of an object $A \in \ca{A}$ is the free $K \odot (-)$-module on $A$ in $\Ind(\ca{A})$. The underlying object of $K \odot S$ in $\Ind(\ca{A})$ is isomorphic to $\bigoplus_{i \in I} S$, where $S$ denotes any set with the cardinality of a $k$-basis of $K$.
 
 In any Grothendieck abelian category, subobjects of semisimple objects are again semisimple, and clearly the simple objects occurring in the subobject are contained in those occuring in the direct sum. Thus the assumption that $S$ is simple implies that any subobject of $K \odot S$ is of the form $\bigoplus_{j \in J} S$ for some subset $J \subseteq I$.
 
 From the isomorphism $k \rightarrow \ca{A}(S,S)$ it follows that the functor
 \[
 -\odot S \colon  \Vect_k \rightarrow \Ind(\ca{A})
 \]
 induces an equivalence between $\Vect_k$ and the full subcategory of $\Ind(\ca{A})$ consisting of direct summands of copies of $S$. Under this equivalence, a subobject $I \subseteq K \odot S$ in $\Ind(\ca{A})_K$ corresponds to an ideal of $K$, so it must either be zero or all of $K$. This shows that the base change of $S$ is simple, and the sequence of isomorphisms
 \[
 \Ind(\ca{A})_K(K \odot S,K \odot S) \simeq \Ind(\ca{A})(S, K \odot S) \simeq K
 \]
 shows that the endomorphism ring of $K \odot S$ is isomorphic to $K$. 
 \end{proof}
 
\section{Scalar extension of Tannakian categories}\label{section:scalar_extension}

\subsection{Some generalities on exterior powers}\label{section:exterior_powers}

 There are two ways to generalize the notion of an exterior power of a vector space to other contexts. Given a module $M$ over a commutative ring $R$, the $n$-th exterior power $E^n(M)$ is traditionally defined as the quotient $M \otimes \ldots \otimes M \slash J$ of the $n$-fold tensor power by the submodule $J$ generated by the elements $m_1 \otimes \ldots \otimes m_n$ with the property that there exists indices $i \neq j$ with $m_i = m_j$. Alternatively, one can define a kind of exterior power $\Lambda^n(M)$ as the image of the \emph{antisymmetrizer}
 \[
 \sum_{\sigma \in \Sigma_n} \sgn(\sigma) \sigma \colon M^{\otimes n} \rightarrow M^{\otimes n} 
 \]
 which sends an elementary tensor $m_1 \otimes \ldots m_n$ to $\sum_{\sigma \in \Sigma_n} \sgn(\sigma) m_{\sigma(1)} \otimes \ldots m_{\sigma(n)}$. There is always a surjective comparison morphism $E^n(M) \rightarrow \Lambda^n(M)$, but there are examples of modules where the comparison is not an isomorphism, see \cite[\S 5]{FLANDERS} for examples. 
 
 The advantage of $\Lambda^n(M)$ is that it can be defined in much greater generality, for example, in any abelian category with a symmetric monoidal structure. On the other hand, the exterior power $E^{n}(M)$ has the following useful property that gives a criterion for a finitely presentable module to be projective.
 
\begin{prop}\label{prop:projective_criterion}
 Let $M$ be a finitely presentable $R$-module. If $M$ is generated by $n$ elements and the exterior power $E^n(M)$ is free of rank one, then $M$ is projective of constant rank $n$.
\end{prop}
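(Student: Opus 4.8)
The plan is to exhibit an explicit isomorphism $R^n \cong M$, from which the conclusion is immediate. Since $M$ is generated by $n$ elements $x_1, \ldots, x_n$, there is a surjection $\phi \colon R^n \rightarrow M$ sending the standard basis $e_1, \ldots, e_n$ to the $x_i$. First I would apply $E^n$ to this surjection. Because $E^n(M)$ is a quotient of the $n$-fold tensor power, the functor $E^n$ preserves epimorphisms, so $E^n(\phi) \colon E^n(R^n) \rightarrow E^n(M)$ is surjective. As $R^n$ is free we have $E^n(R^n) \cong R$ with generator $e_1 \wedge \cdots \wedge e_n$, and by hypothesis $E^n(M) \cong R$; since a surjection between free $R$-modules of rank one is automatically an isomorphism, $E^n(\phi)$ is an isomorphism and the image $x_1 \wedge \cdots \wedge x_n$ of the generator is a free generator of $E^n(M)$.

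The heart of the argument is then to show that $\phi$ is injective, i.e.\ that the only relation $\sum_i a_i x_i = 0$ among the generators is the trivial one. Here I would use the defining property of $E^n$ that distinguishes it from $\Lambda^n$: in $E^n(M)$ every decomposable wedge with a repeated entry vanishes. Fixing $j$ and substituting the relation into the $j$-th slot gives
\[
0 = x_1 \wedge \cdots \wedge \Bigl( \sum_i a_i x_i \Bigr) \wedge \cdots \wedge x_n = \sum_i a_i \, x_1 \wedge \cdots \wedge x_i \wedge \cdots \wedge x_n ,
\]
and every summand with $i \neq j$ has $x_i$ appearing in two slots, hence is zero in $E^n(M)$; only the $i = j$ term survives, yielding $a_j \, (x_1 \wedge \cdots \wedge x_n) = 0$. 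As $x_1 \wedge \cdots \wedge x_n$ is a free generator of $E^n(M) \cong R$, this forces $a_j = 0$, and running over all $j$ shows $\ker \phi = 0$. Thus $\phi \colon R^n \rightarrow M$ is an isomorphism and $M$ is free of rank $n$, in particular projective of constant rank $n$.

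The step I expect to be the crux—and the only place the precise hypotheses enter essentially—is the passage from ``$E^n(M)$ is free of rank one'' to ``the top wedge of the generators is a free generator'', since this is exactly what powers the cancellation $a_j \, (x_1 \wedge \cdots \wedge x_n) = 0 \Rightarrow a_j = 0$. It is also the reason the computation must be carried out with $E^n$ rather than with the antisymmetrizer image $\Lambda^n$: over a general commutative ring the latter need not annihilate wedges with repeated entries (the comparison $E^n \rightarrow \Lambda^n$ is only known to be surjective), so the key vanishing would fail. If one prefers to obtain the stated conclusion directly in terms of local rank, the same mechanism adapts after localizing at a prime $\mathfrak{p}$: the functor $E^n$ commutes with base change, a minimal generating set of $M_{\mathfrak{p}}$ has exactly $n$ elements (otherwise $E^n(M) \otimes_R \kappa(\mathfrak{p})$ would be an exterior power of a vector space of dimension $< n$, hence zero, contradicting $E^n(M) \cong R$), and the wedge computation then shows $M_{\mathfrak{p}}$ is free of rank $n$, whence $M$ is projective of constant rank $n$.
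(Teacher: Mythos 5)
Your proof is correct, and it takes a genuinely different route from the paper. The paper treats the statement as a special case of a theorem of Ramras: it uses finite presentability to localize, reducing to the case of a local ring, where the claim follows by induction from the fact that an epimorphism $M \otimes \cdots \otimes M \rightarrow R$ forces $R$ to be a direct summand of $M$. You instead argue globally and directly: the presentation $\phi \colon R^n \rightarrow M$ induces a surjection $E^n(R^n) \rightarrow E^n(M)$ of free rank-one modules, hence an isomorphism, so $x_1 \wedge \cdots \wedge x_n$ is a free generator; then the defining relations of $E^n$ (vanishing of decomposable tensors with a repeated entry, which is exactly what distinguishes $E^n$ from $\Lambda^n$ and is correctly exploited here) kill every coefficient of any relation among the $x_i$. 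This buys several things: the argument is self-contained (no citation, no localization, no induction), it does not actually use finite presentability --- finite generation suffices --- and it yields the stronger conclusion that $M$ is free of rank $n$, which is consistent with the stated conclusion since a projective module of constant rank $n$ generated by $n$ elements is automatically free (the kernel of $R^n \rightarrow M$ would be finitely generated projective of rank zero). The only cosmetic remark is that your final paragraph on localizing at a prime $\mathfrak{p}$ is redundant given that the global argument already produces an explicit isomorphism $R^n \cong M$.
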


\begin{proof}
 This is a special case of \cite[Theorem~2.4]{RAMRAS}. Since we already assume that $M$ is finitely presentable, we can in fact immediately reduce this to the case where $R$ is a local ring, where it follows by induction from the observation that the existence of an epimorphism $M \otimes \ldots \otimes M \rightarrow R$ of $R$-modules implies that $R$ is a direct summand of $M$, see \cite[Theorem~1.4]{RAMRAS}.
\end{proof}

 It is therefore useful to know when the two definitions of exterior powers coincide. Flanders calls a module \emph{regularly $n$-alternate} if the comparison morphism $E^{n}(M) \rightarrow \Lambda^{n}(M)$ is an isomorphism, respectively \emph{regularly alternate} if it is regularly $n$-alternate for all $n$, see \cite[\S 3]{FLANDERS}. It is elementary to check that a finitely generated free module is regularly alternate. In \S \ref{section:universal_tannakian_categories}, we will need the following result.
 
\begin{prop}\label{prop:regularly_alternate}
 Let $M$ be an $R$-module. If $M$ is generated by $n$ elements, then $M$ is regularly $n$-alternate. 
\end{prop}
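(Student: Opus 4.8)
The plan is to show that both exterior powers are cyclic $R$-modules generated by the top wedge of a fixed generating set, and that the surjective comparison morphism identifies their annihilators. Fix generators $e_1, \dots, e_n$ of $M$, let $\phi \colon F = R^n \to M$ be the resulting epimorphism with kernel $K$, and write $\varepsilon_1, \dots, \varepsilon_n$ for the standard basis of $F$.

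First I would establish cyclicity. Expanding a wedge $m_1 \wedge \dots \wedge m_n$ in terms of the $e_i$ shows that $E^n(M)$ is generated by $\omega_E \defl e_1 \wedge \dots \wedge e_n$. Since the antisymmetrizer is natural and $\phi^{\otimes n}$ is surjective, $\Lambda^n(M) = \phi^{\otimes n}\bigl(\Lambda^n(F)\bigr)$ is generated by $\omega_\Lambda \defl \sum_{\sigma \in \Sigma_n} \sgn(\sigma)\, e_{\sigma(1)} \otimes \dots \otimes e_{\sigma(n)}$. As the comparison morphism sends $\omega_E$ to $\omega_\Lambda$, it is the canonical surjection $R/\mathrm{Ann}(\omega_E) \to R/\mathrm{Ann}(\omega_\Lambda)$, and hence an isomorphism exactly when $\mathrm{Ann}(\omega_\Lambda) \subseteq \mathrm{Ann}(\omega_E)$ (the reverse inclusion $\mathrm{Ann}(\omega_E) \subseteq \mathrm{Ann}(\omega_\Lambda)$ being automatic from $\omega_E \mapsto \omega_\Lambda$).

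To prove this, let $I \subseteq R$ be the ideal generated by all coordinates of all elements of $K$. I would squeeze both annihilators around $I$. For the inclusion $I \subseteq \mathrm{Ann}(\omega_E)$: if $k = \sum_j a_j \varepsilon_j \in K$ then $\sum_j a_j e_j = 0$ in $M$, and wedging this vanishing relation into the $i$-th slot of $e_1 \wedge \dots \wedge e_n$ collapses --- repeated factors being zero --- to $a_i \omega_E = 0$. For the reverse inclusion $\mathrm{Ann}(\omega_\Lambda) \subseteq I$, the key observation is that reducing the presentation $K \to F \to M \to 0$ modulo $I$ annihilates every relation, so that $M/IM$ is free of rank $n$ over $R/I$. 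Its top exterior power $\Lambda^n(M/IM)$ is therefore free of rank one, generated by the image of $\omega_\Lambda$ under the base-change functor $- \otimes_R R/I$ (using that the antisymmetrizer is compatible with symmetric monoidal base change). Consequently, if $r\omega_\Lambda = 0$ then $\bar r \cdot \overline{\omega_\Lambda} = 0$ in $(M/IM)^{\otimes n}$ forces $\bar r = 0$, i.e.\ $r \in I$. Chaining $\mathrm{Ann}(\omega_\Lambda) \subseteq I \subseteq \mathrm{Ann}(\omega_E) \subseteq \mathrm{Ann}(\omega_\Lambda)$ then gives equality and the result.

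The main obstacle is the inclusion $\mathrm{Ann}(\omega_\Lambda) \subseteq I$, because $\Lambda^n(M)$, being defined as the image of the antisymmetrizer, is not preserved by non-flat base change, so one cannot reason locally in a naive fashion. The device that resolves this is that modding out by $I$ makes $M$ free, transporting the computation into the free case where $\Lambda^n$ is free of rank one with a non-zero-divisor generator; detecting $\mathrm{Ann}(\omega_\Lambda)$ through this single base change is what keeps the argument elementary. I note that the hypothesis that $M$ is generated by $n$ elements enters twice: it gives cyclicity with a common generator on both sides, and it ensures $M/IM$ is free of rank exactly $n$, so that its top exterior power is free of rank one rather than zero.
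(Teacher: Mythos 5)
Your proof is correct. The paper itself gives no argument here: it simply cites Flanders' Theorem~5, so your write-up supplies a self-contained proof where the paper outsources one. The structure is sound at every step: (1) both $E^n(M)$ and $\Lambda^n(M)$ are cyclic on the compatible generators $\omega_E$ and $\omega_\Lambda$ (expanding in the $e_i$ and discarding repeated indices, using that vanishing on repeats implies antisymmetry via polarization), so the surjective comparison is an isomorphism precisely when $\mathrm{Ann}(\omega_\Lambda)\subseteq\mathrm{Ann}(\omega_E)$; (2) the inclusion $I\subseteq\mathrm{Ann}(\omega_E)$ follows by substituting a relation into one slot of the wedge; (3) the inclusion $\mathrm{Ann}(\omega_\Lambda)\subseteq I$ follows because every relation dies modulo $I$, so $M/IM$ is free of rank $n$ over $R/I$ and $\overline{\omega_\Lambda}$ is a sum of $n!$ distinct basis elements of $(M/IM)^{\otimes n}$ with unit coefficients, hence has trivial annihilator; and the right-exactness of $-\otimes_R R/I$ applied to $M^{\otimes n}$ legitimately transports the equation $r\omega_\Lambda=0$ (which holds in $M^{\otimes n}$, where $\Lambda^n(M)$ lives as a submodule) down to $(M/IM)^{\otimes n}$. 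Your closing remark correctly isolates where the hypothesis enters and why the argument sidesteps the failure of $\Lambda^n$ (as an image) to commute with arbitrary base change: only the ambient tensor power, not the image itself, needs to be base-changed. This is a worthwhile elementary substitute for the citation, in the same spirit as the paper's stated goal of avoiding technical black boxes.
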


\begin{proof}
 This is for example proved in \cite[Theorem~5]{FLANDERS}.
\end{proof}
 
 A much more intricate analysis of the antisymmetrizer shows that a direct sum of regularly alternate modules is regularly alternate, see \cite[Theorem~6]{FLANDERS}. Together with the above proposition, this implies that any finite direct sum of cyclic modules is regularly alternate. This applies in particular to all the modules that appear in \S \ref{section:proof_of_scalar_extension} below, so in principle, we do not need to distinguish $E^{r}(M)$ and $\Lambda^n(M)$ in our elementary proof. However, the proof does not rely on knowing this fact.
 
\subsection{An elementary proof of the scalar extension result}\label{section:proof_of_scalar_extension}

 The aim of this section is to give an elementary proof of the following theorem.
 
 \begin{thm}\label{thm:scalar_extension}
  Let $k$ be a field, $\ca{T}$ a Tannakian category over $k$, and let $k^{\prime}$ be any field extension of $k$. Then the scalar extension $\Vect_{k^{\prime}}^{\fd} \boxtimes \ca{T}$ is a Tannakian category over $k^{\prime}$.
 \end{thm}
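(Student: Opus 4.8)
The plan is to verify the two conditions from Lemma~\ref{lemma:rigid_abelian}. Since both $\Vect_{k^{\prime}}^{\fd}$ and $\ca{T}$ are weakly Tannakian, Theorem~\ref{thm:kelly_tensor_weakly_tannakian} already shows that $\Vect_{k^{\prime}}^{\fd} \boxtimes \ca{T}$ is weakly Tannakian, with fiber functor
\[
\id \boxtimes w \colon \Vect_{k^{\prime}}^{\fd} \boxtimes \ca{T} \rightarrow \Mod_R^{\fp}, \qquad R \defl k^{\prime} \otimes_k A,
\]
where $w \colon \ca{T} \rightarrow \Mod_A^{\fp}$ is a fiber functor of $\ca{T}$. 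It therefore remains to check that the scalar extension is rigid and that the endomorphism ring of its unit object is a field. The latter is immediate: the unit object is the scalar extension of the unit $\mathbf{1}$ of $\ca{T}$, which is simple with $\ca{T}(\mathbf{1},\mathbf{1}) = k$ because $\ca{T}$ is Tannakian over $k$; Proposition~\ref{prop:scalar_extension_of_simple_object} then shows that its scalar extension is simple with endomorphism ring $k^{\prime}$.

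For rigidity, recall that the faithful exact symmetric strong monoidal functor $\id \boxtimes w$ detects objects with duals, so it suffices to show that $M \defl (\id \boxtimes w)(X)$ is a finitely generated projective $R$-module for every object $X$. I would first reduce the field extension: every object and every morphism of $\Vect_{k^{\prime}}^{\fd} \boxtimes \ca{T}$ is already defined over a finitely generated subextension of $k^{\prime}/k$, and a dual existing over $k^{\prime}$ exists over some finite stage, so we may assume $k^{\prime}/k$ finitely generated. Writing such an extension as a finite tower of simple extensions and using the transitivity of scalar extension (together with the fact that each intermediate scalar extension is again Tannakian by induction), it suffices to treat a single simple extension. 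The finite algebraic case is Deligne's elementary scalar extension result, so the essential new case is a purely transcendental extension $k^{\prime} = k(t)$.

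In that case $R = k(t) \otimes_k A = S^{-1} A[t]$ with $S = k[t] \setminus \{0\}$. Presenting $X$ as the cokernel of a morphism between free modules on objects of $\ca{T}$ (using Proposition~\ref{prop:scalar_extension}) and clearing denominators, I would write $M \cong S^{-1} M_0$, where $M_0$ is the cokernel of an $A[t]$-linear map $\psi = \sum_i b_i(t)\, w(f_i)$ with $b_i \in k[t]$ and $f_i$ morphisms of $\ca{T}$. To show that $M$ is projective I would check that its fibre dimension is locally constant on $\Spec R$. Every point of $\Spec R$ lies over a point $\mathfrak{q} \in \Spec A$ with residue field $\kappa$, and the restriction of $M$ to the corresponding fibre is a finitely generated module over $\kappa \otimes_k k(t) = S^{-1}\kappa[t]$, a localization of the principal ideal domain $\kappa[t]$. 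The classification of finitely generated modules over a principal ideal domain (applied via Smith normal form to the reduction of $\psi$) expresses this fibre as a direct sum of a free module and finitely many cyclic torsion modules $\kappa[t]/(g)$, supported precisely at the values of $t$ at which $\psi$ drops rank.

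The main obstacle is to control these torsion summands, and this is exactly the point at which the rigidity of $\ca{T}$ must be used in an essential way rather than merely the weak Tannakian structure. The goal is to show that every rank drop of $\psi$ occurs at a value of $t$ that is algebraic over $k$; such a torsion module $\kappa[t]/(g)$ is then annihilated upon inverting $S = k[t] \setminus \{0\}$, since the minimal polynomial over $k$ of an algebraic value lies in $S$, and hence it disappears in the generic fibre $M$. I expect the heart of the argument to be that the finite-dimensionality over $k$ of the Hom-spaces of the Tannakian category $\ca{T}$ — a consequence of rigidity together with $\ca{T}(\mathbf{1},\mathbf{1}) = k$ — forces, through the interaction of composition and duality, the determinantal loci governing the rank of $\psi$ to be defined over the algebraic closure of $k$ inside $\kappa$, so that no torsion survives. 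Granting this, each fibre of $M$ is free of constant rank, whence $M$ is finitely generated projective. I would package this final implication through the exterior-power criterion of \S\ref{section:exterior_powers}: if $M$ is generated by $n$ elements, then Proposition~\ref{prop:regularly_alternate} gives $E^n(M) \cong \Lambda^n(M) \cong (\id \boxtimes w)(\Lambda^n X)$, and the vanishing of the surviving torsion shows that $E^n(M)$ is invertible, hence locally free of rank one, so Proposition~\ref{prop:projective_criterion} applies locally on $\Spec R$ and yields that $M$ is projective of rank $n$. This establishes rigidity and, with it, the theorem.
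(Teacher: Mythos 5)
Your reduction steps match the paper's: the scalar extension is weakly Tannakian with the fiber functor $\id \boxtimes w$ by Theorem~\ref{thm:kelly_tensor_weakly_tannakian}, its unit is simple with endomorphism ring $k^{\prime}$ by Proposition~\ref{prop:scalar_extension_of_simple_object}, rigidity is the only remaining issue by Lemma~\ref{lemma:rigid_abelian}, and one reduces to finitely generated extensions and then to the finite algebraic and purely transcendental cases. The problem is the purely transcendental case, which is the whole point of the theorem: there your proof stops being a proof. You correctly identify that over $R = k(t)\otimes_k K$, a localization of the principal ideal domain $K[t]$, the module $M$ decomposes as free plus torsion and that the task is to kill the torsion. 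But your mechanism for doing so --- ``I expect the heart of the argument to be that the finite-dimensionality over $k$ of the Hom-spaces \dots forces, through the interaction of composition and duality, the determinantal loci governing the rank of $\psi$ to be defined over the algebraic closure of $k$'' --- is an unproven expectation, and everything after ``Granting this'' rests on it. No argument is given for why the rank-drop locus should consist only of points algebraic over $k$, and it is not clear how finite-dimensionality of Hom-spaces in $\ca{T}$ would yield this; this is precisely the step that requires a genuine idea.

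The paper's Lemma~\ref{lemma:fiber_functor_to_filtered_colimit_of_PIDs} closes this gap by a quite different and concrete device, whose essential input is the \emph{simplicity of the unit object of the scalar extension} (already secured by Proposition~\ref{prop:scalar_extension_of_simple_object}), not finite-dimensionality of Hom-spaces in $\ca{T}$. First, Lemma~\ref{lemma:exterior_power_torsion} shows that if the torsion part of $w(A)$ were non-zero, some exterior power $\Lambda^{n+1}(A)$ would have non-zero torsion image, so it suffices to show $w(X)$ torsion implies $w(X)=0$. For such an $X$, one takes an epimorphism $p \colon V \to X$ with $V$ dualizable, observes that its kernel $\varphi \colon V^{\prime} \to V$ has dualizable source with $w(V^{\prime})$ and $w(V)$ free of the same rank $n$, and then proves $\varphi$ is an isomorphism by examining the determinant $\Lambda^n(\varphi)$: this is a monomorphism between $\otimes$-invertible objects, and $\otimes$-invertible objects are simple when the unit is simple, so the monomorphism is an isomorphism. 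Your proposal would need to supply an argument of comparable substance for the torsion-killing step; as written, the central difficulty of the theorem is asserted rather than proved.
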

 
 As we have already observed, an immediate corollary of this theorem is that fiber functors of Tannakian categories are locally unique.
 
 \begin{cor}
 Let $\ca{T}$ be a Tannakian category over $k$ and $S$ a scheme. Then for any two fiber functors $w_1, w_2 \colon \ca{T} \rightarrow \QCoh(S)$, there exists an $\fpqc$-cover $p \colon T \rightarrow S$ such that there exists a natural isomorphism $p^{\ast} w_1 \cong p^{\ast} w_2$ of symmetric monoidal functors. Moreover, there exists an $\fpqc$-gerbe $G$ over $k$ with affine diagonal and which admits an $\fpqc$-cover by an affine scheme such that $\ca{T}$ is equivalent to $\Coh(G)$ as a symmetric monoidal $k$-linear category.
 \end{cor}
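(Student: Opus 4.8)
The plan is to deduce everything from Theorem~\ref{thm:scalar_extension} via the reduction set up in \S\ref{section:weakly_tannakian}. Since $\ca{T}$ is Tannakian, Lemma~\ref{lemma:rigid_abelian} supplies a fiber functor $w \colon \ca{T} \rightarrow \Vect_K^{\fd}$ for some field extension $K$ of $k$; its values are finite-dimensional because the objects of $\ca{T}$ are finitely presentable. Applying Theorem~\ref{thm:scalar_extension} with $k^{\prime} = K$ shows that $\Vect_K^{\fd} \boxtimes \ca{T}$ is Tannakian, so the hypothesis of Corollary~\ref{cor:scalar_to_uniqueness} is met with $\ca{T}^{\prime} = \ca{T}$ and $w^{\prime} = w$. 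Hence $\ca{T} \boxtimes \ca{T}$ is a Tannakian category over $k$.

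Next I would pass to the geometric side. By Theorem~\ref{thm:tannaka_duality} we may write $\ca{T} \simeq \QCoh_{\fp}(X)$ for an Adams stack $X$, unique up to equivalence, and we set $G \defl X$. The discussion following Corollary~\ref{cor:coherent_sheaves_on_products} now applies: since $\ca{T} \boxtimes \ca{T}$ is Tannakian, the tensor product functor $\ca{T} \boxtimes \ca{T} \rightarrow \ca{T}$ is faithful and exact, whence the second part of that corollary shows that the diagonal of $G$ is faithfully flat. Thus $G$ is an $\fpqc$-gerbe over $k$. Its diagonal is affine because $X$ is semi-separated, and a fiber functor $\ca{T} \rightarrow \Mod_A^{\fp}$ (which exists by Definition~\ref{dfn:weakly_tannakian}) corresponds to a flat affine morphism $\Spec A \rightarrow G$ that is faithfully flat precisely because $G$ is a gerbe; this exhibits the required affine $\fpqc$-cover. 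Finally $\ca{T} \simeq \QCoh_{\fp}(G)$ is by definition the category $\Coh(G)$ of coherent sheaves on $G$, which proves the \emph{Moreover} part.

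It then remains to produce the cover $p$ in the first assertion. Under Tannaka duality the two fiber functors $w_1, w_2$ correspond to morphisms $f_1, f_2 \colon S \rightarrow G$, and $p^{\ast} w_i$ corresponds to $f_i \circ p$ for any $p$. I would form the $2$-fiber product
\[
T \defl S \times_{G \times G} G
\]
taken along $(f_1, f_2) \colon S \rightarrow G \times G$ and the diagonal $\Delta \colon G \rightarrow G \times G$, with its projection $p \colon T \rightarrow S$. Because $\Delta$ is faithfully flat and affine, so is its base change $p$, which is therefore an $\fpqc$-cover; and the universal property of the fiber product furnishes a canonical isomorphism $f_1 \circ p \cong f_2 \circ p$, hence $p^{\ast} w_1 \cong p^{\ast} w_2$ as symmetric monoidal functors.

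The conceptual content is carried entirely by Theorem~\ref{thm:scalar_extension}; what remains here is bookkeeping. The step I expect to require the most care is the passage between the categorical and geometric formulations: identifying $\QCoh(S)$-valued fiber functors with $S$-points of $G$ in the generality where $S$ is an arbitrary scheme and the target is not restricted to finitely presentable sheaves, and checking that the diagonal of $G$ is not merely faithfully flat but also affine, so that base change along it genuinely yields an $\fpqc$-cover of $S$ rather than only a flat epimorphism of stacks.
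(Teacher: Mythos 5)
Your proposal is correct and follows essentially the same route as the paper: the paper's proof simply observes that Theorem~\ref{thm:scalar_extension} verifies the hypothesis of Corollary~\ref{cor:scalar_to_uniqueness}, giving the first claim, and deduces the second from Tannaka duality (Theorem~\ref{thm:tannaka_duality}). Your additional details (the cartesian square along the diagonal, the non-emptiness and affineness checks) are exactly the content the paper already unpacked in the discussion surrounding Corollaries~\ref{cor:coherent_sheaves_on_products} and~\ref{cor:scalar_to_uniqueness}.
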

 
 \begin{proof}
 By Theorem~\ref{thm:scalar_extension}, the condition of Corollary~\ref{cor:scalar_to_uniqueness} is always satisfied. This proves the first claim, and the second follows directly from this and Tannaka duality (see Theorem~\ref{thm:tannaka_duality}).
 \end{proof}
 
 To prove Theorem~\ref{thm:scalar_extension}, first note that we have already seen that the scalar extension of a Tannakian category has a simple unit object with endomorphism ring the (new) ground field (see Proposition~\ref{prop:scalar_extension_of_simple_object}). Moreover, from Theorem~\ref{thm:kelly_tensor_weakly_tannakian}, we know that the scalar extension is a \emph{weakly} Tannakian category and that it has a fiber functor with target the category of $k^{\prime} \ten{k} K$-modules. In other words, we want to show that any weakly Tannakian category $\ca{A}$ with fiber functor
 \[
 w \colon \ca{A} \rightarrow \Mod_{k^{\prime} \ten{k} K}
 \]
 and a simple unit object is Tannakian. To do this, we only need to check that every object of the scalar extension has a dual, see Lemma~\ref{lemma:rigid_abelian}.
 
 It therefore suffices to prove Theorem~\ref{thm:scalar_extension} for finitely generated field extensions since scalar extension commutes with filtered (bicategorical) colimits. This naturally reduces the problem to three cases, namely the case where the field extension $k^{\prime}$ of $k$ is finite separable, finite purely inseparable, or purely transcendental of transcendence degree one.
 
 The case of finite separable field extensions is the easiest. In this case, the algebra $k^{\prime} \ten{k} K$ is a semisimple algebra, so all modules are projective. The claim follows since fiber functors detect objects with duals.
 
 In the finite purely inseparable case, we can further reduce to field extensions $k^{\prime}$ generated by a single $p$-th root, where $p$ denotes the characteristic of $k$. Then the tensor product $k^{\prime} \ten{k} K$ is either again a field, in which case there is nothing to do, or it is isomorphic to the quotient $K[t] \slash (t^p)$ of the polynomial algebra in one variable. This case was proved by Deligne, see \cite[Th{\'e}or{\`e}me~5.4]{DELIGNE_SEMI_SIMPLE}. Since this is in particular a local ring, it also follows from Lemma~\ref{lemma:local_ring} below.
 
 It remains to consider the case where $k^{\prime}=k(t)$, the field of rational functions in one variable. As a $k$-algebra, $k(t)$ is given by the directed union of the algebras $k[t]_{f}$ of localizations of the ring of polynomial functions at the non-zero polynomials $f$, ordered by divisibility. Thus the tensor product $k(t) \ten{k} K$ is isomorphic to the directed union of $K$-algebras $K[t]_f$, where $f$ again denotes a non-zero polynomial with coefficients in $k$. These are all principal ideal domains, so it follows that any finitely \emph{presentable} module over  $k(t) \ten{k} K$ is a direct sum of cyclic modules. In particular, any such module is a direct sum of a finitely generated free module and a torsion module, so the following lemma is applicable.
 
\begin{lemma}\label{lemma:exterior_power_torsion}
 Let $R$ be a domain, let $F_n$ be a free $R$-module of rank $n$, and let $T$ be a torsion $R$-module. Then the module $\Lambda^{n+1}(F_{n} \oplus T)$ is a non-zero torsion module.
\end{lemma}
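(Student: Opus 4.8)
The plan is to establish the two assertions---that $\Lambda^{n+1}(M)$ is torsion and that it is non-zero---separately, working directly with the antisymmetrizer definition $\Lambda^{n+1}(M) = \mathrm{im}\bigl(\sum_{\sigma \in \Sigma_{n+1}} \sgn(\sigma)\,\sigma\bigr) \subseteq M^{\otimes(n+1)}$, where $M = F_n \oplus T$. Throughout I take $T \neq 0$ (otherwise $M$ is free of rank $n$ and the top exterior power already vanishes, so this hypothesis is understood). I fix a basis $e_1, \dots, e_n$ of $F_n$ with dual coordinate functionals $e_1^{\ast}, \dots, e_n^{\ast} \colon F_n \to R$, and choose a non-zero element $t \in T$.

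For the \emph{torsion} statement I would localize at the generic point. Writing $\mathrm{Frac}(R)$ for the fraction field of the domain $R$, the module $T$ becomes zero after tensoring with $\mathrm{Frac}(R)$, so $M \ten{R} \mathrm{Frac}(R) \cong \mathrm{Frac}(R)^n$. Since localization is flat, it commutes with tensor powers and with formation of the image of the antisymmetrizer, whence $\Lambda^{n+1}(M) \ten{R} \mathrm{Frac}(R) \cong \Lambda^{n+1}_{\mathrm{Frac}(R)}(\mathrm{Frac}(R)^n)$. Over a field a finitely generated free module is regularly alternate (as recalled above), so this agrees with $E^{n+1}(\mathrm{Frac}(R)^n) = 0$ because $n+1 > n$. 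Thus $\Lambda^{n+1}(M)$ vanishes after inverting the non-zero elements of $R$, which is precisely the statement that it is a torsion $R$-module.

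The crux is \emph{non-vanishing}. I would consider the explicit element $\omega = \sum_{\sigma \in \Sigma_{n+1}} \sgn(\sigma)\, x_{\sigma(1)} \otimes \cdots \otimes x_{\sigma(n+1)}$ obtained by antisymmetrizing $e_1 \otimes \cdots \otimes e_n \otimes t$; by definition $\omega \in \Lambda^{n+1}(M)$, so it suffices to show that $\omega \neq 0$ in $M^{\otimes(n+1)}$. The decomposition $M = F_n \oplus T$ splits $M^{\otimes(n+1)}$ into a direct sum indexed by the labellings of the tensor slots by $F_n$ or $T$; projecting onto the summand $F_n^{\otimes n} \ten{R} T$ (that is, $T$ in the last slot and $F_n$ elsewhere) annihilates every term of $\omega$ except those coming from permutations fixing the index $n+1$, and it leaves $\omega_0 \otimes t$, where $\omega_0 = \sum_{\tau \in \Sigma_n} \sgn(\tau)\, e_{\tau(1)} \otimes \cdots \otimes e_{\tau(n)} \in F_n^{\otimes n}$. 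Applying the coordinate functional $e_1^{\ast} \otimes \cdots \otimes e_n^{\ast} \otimes \id_T \colon F_n^{\otimes n} \ten{R} T \to T$, and using that the basis tensor $e_1 \otimes \cdots \otimes e_n$ occurs in $\omega_0$ with coefficient $1$ (it arises only from $\tau = \id$, the $e_i$ being distinct members of a basis), the image of $\omega$ is $t \neq 0$. Hence $\omega \neq 0$ and $\Lambda^{n+1}(M) \neq 0$.

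I expect the non-vanishing to be the only real obstacle: the torsion assertion is a one-line localization argument, whereas for non-vanishing one must avoid invoking any direct-sum decomposition of $\Lambda^{n+1}(F_n \oplus T)$ (which would tacitly use regular alternateness, the very thing we wish to circumvent) and instead produce a concrete functional separating the antisymmetrized tensor from $0$. The projection onto $F_n^{\otimes n} \ten{R} T$ followed by the coordinate functional is robust precisely because it uses only the freeness of $F_n$ and the splitting $M = F_n \oplus T$; in particular it requires neither that $T$ be finitely generated nor any comparison between $E^{n+1}$ and $\Lambda^{n+1}$.
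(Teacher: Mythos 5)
Your proof is correct. The non-vanishing argument -- the crux of the lemma -- is essentially identical to the paper's: both antisymmetrize $e_1 \otimes \cdots \otimes e_n \otimes t$ and detect the result with the functional $\pr_1 \otimes \cdots \otimes \pr_n \otimes \pr_T$ (your projection onto the summand $F_n^{\otimes n} \otimes_R T$ followed by $e_1^{\ast} \otimes \cdots \otimes e_n^{\ast} \otimes \id_T$ is exactly that map), observing that only the term for $\sigma = \id$ survives and yields $t \neq 0$. Where you genuinely diverge is the torsion half: the paper passes to the quotient-style exterior power, invoking the direct-sum formula $E^{n+1}(F_n \oplus T) \cong \bigoplus_{k+\ell=n+1} E^{k}(F_n) \otimes E^{\ell}(T)$ together with the surjection $E^{n+1}(M) \twoheadrightarrow \Lambda^{n+1}(M)$, whereas you base-change to $\mathrm{Frac}(R)$ and use that flat base change commutes with tensor powers and with images, so that $\Lambda^{n+1}(M)$ vanishes generically. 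Both are sound; your route avoids the K\"unneth-type formula for $E^{n+1}$ at the cost of the (equally standard) compatibility of the antisymmetrizer image with flat base change, and it has the mild aesthetic advantage of never leaving the $\Lambda$-world. Your explicit caveat that $T \neq 0$ must be assumed for non-vanishing is also right, and matches the paper's implicit choice of a non-zero $x \in T$.
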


\begin{proof}
 To see that $\Lambda^{n+1}(F_n \oplus T)$ is torsion, it suffices to check that the exterior power $E^{n+1}(F_n \oplus T)$ is torsion since the canonical morphism $E^n(M) \rightarrow \Lambda^n(M)$ is an epimorphism for any $R$-module $M$. For $E^{n+1}(-)$, the direct sum formula
 \[
 E^{n+1}(F_n \oplus T) \cong \bigoplus_{k + \ell = n+1} E^k(F_n) \otimes E^{\ell}(T)
 \]
 holds. Since $E^{n+1}(F_n) \cong 0$, all the direct summands occuring are direct sums of copies of $E^{\ell}(T)$ with $\ell \geq 1$, hence torsion.
 
 It remains to check that $\Lambda^n(F_{n+1} \oplus T)$ is non-zero. To see this, pick $x \in T$ a non-zero element and let $e_i$ denote a basis of $F_n$. Let $v_i=e_i$, $1 \leq i \leq n$, and let $v_{n+1}=x$. We claim that the image
\begin{equation}\label{eqn:antisymmetrizer}
a(v_1 \otimes \ldots \otimes v_{n+1}) \defl \sum\nolimits_{\sigma \in \Sigma_{n+1}} \sgn(\sigma) v_{\sigma(1)} \otimes \ldots \otimes v_{\sigma(n+1)}
\end{equation}
  of $v_1 \otimes \ldots \otimes v_{n+1}$ under the antisymmetrizer on $(F_n \oplus T)^{\otimes n+1}$ is non-zero. To see this, consider the homomorphism
 \[
 \varphi \defl \pr_1 \otimes \ldots \otimes \pr_n \otimes \pr_T \colon (F_n \oplus T)^{\otimes n+1} \rightarrow R \otimes \ldots R \otimes T \cong T
 \]
 where $\pr_i$ denotes the projection to the $i$-coordinate of $F_n \cong R^{\oplus n}$. The image of any summand occurring in Formula~\eqref{eqn:antisymmetrizer} is thus of the form
 \[
 \sgn(\sigma) \pr_1(v_{\sigma(1)}) \otimes \ldots \otimes \pr_n(v_{\sigma(n)}) \otimes \pr_T(v_{\sigma(n+1)})
 \]
 for some permutation $\sigma$ of $n+1$ elements. This vanishes as soon as $v_{\sigma(i)} \neq v_i$ for some $1 \leq i \leq n$, hence for any $\sigma \neq \id$. Thus we have
 \[
 \varphi\bigl( a(v_1 \otimes \ldots \otimes v_{n+1}) \bigr)=1 \otimes \ldots \otimes 1 \otimes x \smash{\rlap{,}}
 \]
 which is non-zero since $x \neq 0$.
\end{proof}

\begin{lemma}\label{lemma:fiber_functor_to_filtered_colimit_of_PIDs}
 Let $\ca{A}$ be a weakly Tannakian category with a simple unit object. If there exists a fiber functor
 \[
  w \colon \ca{A} \rightarrow \Mod_R
 \]
 where $R$ is a directed union of principal ideal domains, then $\ca{A}$ is Tannakian.
\end{lemma}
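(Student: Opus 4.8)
The plan is to reduce to rigidity via Lemma~\ref{lemma:rigid_abelian} and then to rule out torsion using the simplicity of the unit. Since the unit object of $\ca A$ is simple, Schur's lemma (in the abelian category $\Ind(\ca A)$) shows that $\ca A(\mathbf 1,\mathbf 1)$ is a division ring; being a commutative $k$-algebra it is a field. By Lemma~\ref{lemma:rigid_abelian} it therefore suffices to prove that $\ca A$ is rigid. As the fiber functor $w$ is faithful, exact on ind-objects, and strong monoidal, it detects objects with duals, so I only need to show that $w(A)$ is a finitely generated projective $R$-module for every object $A \in \ca A$.

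First I would exploit the structure $R = \bigcup_i R_i$. Every finitely presentable $R$-module descends to some $R_i$, where, being finitely generated over the principal ideal domain $R_i$, it splits as a free module plus a torsion module; and every finitely generated projective $R$-module is free, since it descends to a projective, hence free, module over some $R_i$. Applying this to $w(A)$ gives a decomposition $w(A) \cong R^{n} \oplus T'$ with $T'$ a torsion $R$-module and $n = \dim_F w(A)\otimes_R F$ the generic rank, where $F = \mathrm{Frac}(R)$. Since a free module over the domain $R$ is torsion-free, $w(A)$ is finitely generated projective if and only if $T' = 0$.

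It remains to prove that $T' = 0$, and here the simplicity of the unit is decisive. Suppose $T' \neq 0$. The exterior power $E^{n+1}(A)$ lies in $\ca A$, being the cokernel of a map between tensor powers of $A$, and is preserved by the right exact strong monoidal functor $w$, so that $w\bigl(E^{n+1}(A)\bigr) \cong E^{n+1}\bigl(w(A)\bigr)$. By the computation in the proof of Lemma~\ref{lemma:exterior_power_torsion}, applied over the domain $R$ to $w(A)\cong R^n\oplus T'$, this module is torsion (by the direct sum formula for $E^{n+1}$, using $E^{n+1}(R^n)\cong 0$) and nonzero (it surjects onto the nonzero module $\Lambda^{n+1}(w(A))$). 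In particular $E^{n+1}(A)\neq 0$. Now consider the generic fiber functor
\[
 w_F \colon \ca A \xrightarrow{\ w\ } \Mod_R \xrightarrow{\ -\otimes_R F\ } \Vect_F^{\fd},
\]
a right exact symmetric strong monoidal functor whose domain has simple unit object; by the same reasoning as in the proof of Lemma~\ref{lemma:rigid_abelian}, it is faithful by \cite[Corollaire~2.10]{DELIGNE}. But $w_F\bigl(E^{n+1}(A)\bigr) = E^{n+1}(w(A))\otimes_R F = 0$ because $E^{n+1}(w(A))$ is torsion, which contradicts faithfulness. Hence $T' = 0$, the module $w(A)$ is free, and $A$ has a dual; thus $\ca A$ is rigid and therefore Tannakian.

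I expect the step requiring the most care to be the production of a nonzero object of $\ca A$ with torsion image. Because $w$ is faithful but not full, the torsion submodule of $w(A)$ need not be realized by any subobject of $A$, so one cannot simply extract ``the torsion part''; the exterior power $E^{n+1}$ supplies a functorial, tensor-compatible construction of such an object, and Lemma~\ref{lemma:exterior_power_torsion} certifies that it is nonzero. Two points should be checked carefully: that $w_F$ really meets the hypotheses of Deligne's faithfulness criterion, and that one works with $E^{n+1}(A)$ rather than $\Lambda^{n+1}(A)$, so as to remain inside $\ca A$, where faithfulness applies, rather than only inside $\Ind(\ca A)$, on which $-\otimes_R F$ fails to be faithful.
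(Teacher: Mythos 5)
There is a genuine gap at the decisive step. To rule out the torsion summand you pass to the generic fiber functor $w_F = (-\otimes_R F)\circ w \colon \ca{A} \to \Vect_F^{\fd}$ and assert that it is faithful ``by the same reasoning as in the proof of Lemma~\ref{lemma:rigid_abelian}'', citing Deligne's Corollaire~2.10. But that corollary, as it is used in Lemma~\ref{lemma:rigid_abelian} and everywhere else in this paper, requires the \emph{domain} of the tensor functor to be rigid: in Lemma~\ref{lemma:rigid_abelian} the category has already been assumed rigid before the corollary is invoked. Here rigidity of $\ca{A}$ is exactly what you are trying to prove, so the appeal is circular; a simple unit alone does not make right exact tensor functors faithful (the standard faithfulness argument needs the evaluation $X^{\vee}\otimes X \to O$ to be an epimorphism, which presupposes duals). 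Note that if you could establish directly that $w_F$ is faithful and exact on ind-objects, you would be finished without any exterior powers at all, since a fiber functor with values in $\Vect_F^{\fd}$ detects duals and every finite-dimensional vector space has one --- a sign that this is where the real work lies. The paper's proof replaces this step by a non-circular argument: given $X$ with $w(X)$ torsion, it chooses an epimorphism $p\colon V \to X$ with $V$ dualizable, shows that the kernel $V^{\prime} \to V$ is again dualizable of the same rank (using that $R$ is flat over the subring $R_0$ from which $w(p)$ is extended), and proves this monomorphism is an isomorphism by a determinant argument, using that $\otimes$-invertible objects are simple because the unit is.

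A second, more repairable, problem: $E^{n+1}(A)$ is not defined for an object $A$ of an abstract tensor category. As \S\ref{section:exterior_powers} emphasizes, $E^{n+1}$ (the quotient by tensors with a repeated entry) is an element-wise construction on modules and is \emph{not} the cokernel of a map between tensor powers of $A$; the categorical substitute is $\Lambda^{n+1}(A)$, the image of the antisymmetrizer, which lives in $\Ind(\ca{A})$ and is preserved by the exact functor $\Ind(w)$. Your opening reductions (field of endomorphisms of the unit, reduction to rigidity, the decomposition $w(A)\cong R^{n}\oplus T^{\prime}$ over the directed union of PIDs, and the use of Lemma~\ref{lemma:exterior_power_torsion} to produce a nonzero object with torsion image) all agree with the paper; the proof collapses only at the faithfulness claim for $w_F$.
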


\begin{proof}
 The endomorphism ring of the unit of a monoidal category is always commutative by the Eckmann--Hilton argument. The assumption that the unit is simple thus implies that the endomorphism ring is a field. To show that $\ca{A}$ is Tannakian, it only remains to show that it is rigid (see Lemma~\ref{lemma:rigid_abelian}). To see this, it suffices to check that $w(A)$ is finitely generated projective for any object $A \in \ca{A}$.
 
 Since the objects with duals form a generator of $\Ind(\ca{A})$ (see Definition~\ref{dfn:weakly_tannakian}), it follows that $w(A)$ is a finitely presentable module. The assumption on $R$ therefore implies that it is a direct sum of finitely many cyclic modules. In particular, we have $w(A) \cong F_n \oplus T$ where $F_n$ is a free module of rank $n$ and $T$ is a finitely presentable torsion module. It only remains to check that $T \cong 0$.
 
 If not, we claim that there exists an object $X \in \ca{A}$ such that $w(X)$ is a non-zero torsion module. Indeed, $w\bigr(\Lambda^{n+1}(A)\bigr) \cong \Lambda^{n+1}\bigr(w(A)\bigl)\cong \Lambda^{n+1}(F_n \oplus T)$ is a non-zero torsion module by Lemma~\ref{lemma:exterior_power_torsion}. It thus only remains to check that the implication
 \[
 w(X) \; \text{torsion} \Rightarrow w(X) \cong 0
 \]
 holds for all $X \in \ca{A}$.
 
 We thus fix an object $X$ such that $T \defl w(X)$ is torsion. By definition of weakly Tannakian categories, there exists an epimorphism $p \colon V \rightarrow X$ where $V \in \ca{A}$ is an object with a dual. Let $\varphi \colon V^{\prime} \rightarrow V$ denote the kernel of $p$ in $\Ind(\ca{A})$. By assumption on $R$, the morphism $w(p) \colon w(V) \rightarrow w(X)$ is extended from one of the principal ideal domains $R_0 \subseteq R$ exhibiting $R$ as directed union of such. Since $R$ is flat over $R_0$ (it is clearly torsion free as $R_0$-module), it follows that the kernel of $w(p)$ is also extended from $R_0$, hence it is finitely generated and free. On the other hand, this kernel is isomorphic to the image of $V^{\prime}$ under the extension of $w$ to ind-objects. It follows that $V^{\prime}$ has a dual, hence in particular that it lies in $\ca{A}$. Moreover, passing to the field of fractions shows that $w(V^{\prime})$ and $w(V)$ are free of the same rank $n \geq 0$.
 
 We claim that $\varphi$ is an isomorphism. It suffices to check that $w(\varphi)$ is an isomorphism. Since this is a morphism between free modules of the same rank, we only need to check that the determinant $E^n\bigl(w(\varphi)\bigr) \cong \Lambda^n\bigl(w(\varphi)\bigr) \cong w\bigr(\Lambda^n(\varphi)\bigl)$ is invertible (recall that the two notions of exterior power coincide for free modules). Since $w(V^{\prime})$ and $w(V)$ are flat, it follows that the $n$-fold tensor product $\varphi^{\otimes n} \colon (V^{\prime})^{\otimes n} \rightarrow V^{\otimes n}$ is a monomorphism. Thus $\Lambda^n(\varphi)$ is a monomorphism between two ind-objects whose images in $\Mod_R$ are free of rank one. But any such ind-object is $\otimes$-invertible in $\Ind(\ca{A})$ (hence it lies in particular in $\ca{A}$). Moreover, since the unit object of $\ca{A}$ is simple, all $\otimes$-invertible objects of $\ca{A}$ are simple as well. Thus $\Lambda^n(\varphi)$ is indeed an isomorphism. This shows that $T=w(X)$ is zero, as claimed.
 \end{proof}
 
 With this in hand, the proof that a scalar extension of Tannakian categories is again Tannakian is now straightforward.
 
 \begin{proof}[Proof of Theorem~\ref{thm:scalar_extension}]
  Let $\ca{T}$ be a Tannakian category over $k$ with fiber functor $w \colon \ca{A} \rightarrow \Vect_K$. For any field extension $k^{\prime}$ of $k$, the scalar extension $\Vect_{k^{\prime}}^{\fd} \boxtimes \ca{T}$ is weakly Tannakian and
  \[
  \id \boxtimes w \colon \Vect_{k^{\prime}}^{\fd} \boxtimes \ca{T} \rightarrow \Mod_{k^{\prime} \ten{k} K}
  \]
  is a fiber functor. Moreover, the unit object of $\Vect_{k^{\prime}}^{\fd} \boxtimes \ca{T}$ is simple by Proposition~\ref{prop:scalar_extension_of_simple_object}. By Lemma~\ref{lemma:rigid_abelian}, it only remains to show that the scalar extension is rigid.

 Since the Kelly tensor product commutes with (filtered) bicategorical colimits, it suffices to check this for finitely generated field extensions. By adjoining one element at a time, this further reduces the problem to the case of a finite separable extension, a purely inseparable extension of degree equal to the characteristic, and a purely transcendental extension of transcendence degree one.
  
  The first case is straightforward and the second is covered either by \cite[Th{\'e}or{\`e}me~5.4]{DELIGNE_SEMI_SIMPLE} or Lemma~\ref{lemma:local_ring} below. If $k^{\prime}$ is purely transcendental of transcendence degree one, then $k^{\prime} \ten{k} K$ is the directed union of the localizations $K[t]_f$ of the ring of polynomials at the non-zero polynomials with coefficients in $k$. Thus $\Vect_{k^{\prime}}^{\fd} \boxtimes \ca{T}$ is Tannakian by Lemma~\ref{lemma:fiber_functor_to_filtered_colimit_of_PIDs}.
 \end{proof}
\section{Universal Tannakian categories}\label{section:universal_tannakian_categories}

\subsection{Weakly Tannakian categories with simple unit object}\label{section:simple_unit}
 In Lemma~\ref{lemma:fiber_functor_to_filtered_colimit_of_PIDs}, we have seen that a weakly Tannakian category with a simple unit object and which admits a particular type of fiber functor is necessarily Tannakian. The aim of this section is to prove the following theorem, which shows that the condition on the fiber functor is in fact superfluous.
 
\begin{thm}\label{thm:simple_unit_iff_tannakian}
 A weakly Tannakian category $\ca{A}$ is Tannakian if and only if the unit object of $\ca{A}$ is simple.
\end{thm}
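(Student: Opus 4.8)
The equivalence splits into two directions of very unequal difficulty. For the easy direction, that a Tannakian category $\ca{A}$ has simple unit, I would invoke Lemma~\ref{lemma:rigid_abelian}: such a category is rigid with $\End(\mathbf{1})$ a field $k$, and in any rigid abelian tensor category over $k=\End(\mathbf{1})$ the unit is simple. Concretely, given a nonzero subobject $S\subseteq\mathbf{1}$, tensoring with dualizable objects stays exact, and one checks that the composite $S\otimes S\hookrightarrow S\otimes\mathbf{1}\cong S$ exhibits $S$ as a $\otimes$-idempotent subobject of $\mathbf{1}$, hence a direct summand; the resulting idempotent of $\End(\mathbf{1})=k$ is $0$ or $1$, so $S=0$ or $S=\mathbf{1}$.

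The content lies in the converse. Let $\ca{A}$ be weakly Tannakian with simple unit and fix a fiber functor $w\colon\ca{A}\to\Mod^{\fp}_A$. Since $\End(\mathbf{1})$ is commutative (Eckmann--Hilton) and a division ring (Schur), simplicity of the unit forces it to be a field, so by Lemma~\ref{lemma:rigid_abelian} it suffices to prove that $\ca{A}$ is rigid. As $w$ is symmetric strong monoidal it detects objects with duals, so rigidity reduces to showing that $w(X)$ is finitely generated projective for every $X$. Exactly as in the proof of Lemma~\ref{lemma:fiber_functor_to_filtered_colimit_of_PIDs}, the fact that objects with duals generate $\ca{A}$ together with exactness of $\Ind(w)$ already makes each $w(X)$ finitely presentable, so the whole task is to upgrade ``finitely presentable'' to ``projective.''

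This is precisely the conclusion of Lemma~\ref{lemma:fiber_functor_to_filtered_colimit_of_PIDs}, proved there when $A$ is a directed union of principal ideal domains, and my plan is to remove that hypothesis while recycling its mechanism. Projectivity of a finitely presentable module is a pointwise (flatness) condition, and exterior powers detect the pointwise rank. Writing $d_0$ for the largest integer with $\Lambda^{d_0}(X)\neq 0$ (which exists since $w(X)$ is finitely generated and $w$ is faithful), the plan hinges on showing that $\Lambda^{d_0}(X)$ is $\otimes$-invertible. Granting this, $w\bigl(\Lambda^{d_0}(X)\bigr)\cong\Lambda^{d_0}\bigl(w(X)\bigr)$ is a line bundle, so $w(X)$ has rank $\geq d_0$ at every prime, while $\Lambda^{d_0+1}\bigl(w(X)\bigr)=0$ bounds the rank above; the rank is thus constantly $d_0$, and over a reduced ring a finitely presentable module of constant rank is projective. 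To obtain the invertibility I would use the principle already exploited at the end of Lemma~\ref{lemma:fiber_functor_to_filtered_colimit_of_PIDs}, that an ind-object whose image is a rank-one projective is $\otimes$-invertible, thereby reducing the claim to showing that the top exterior power $\Lambda^{d_0}\bigl(w(X)\bigr)$ carries no torsion; the simplicity of the unit enters through the fact that every $\otimes$-invertible object of $\ca{A}$ is then simple.

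The hard part will be establishing that $\Lambda^{d_0}(X)$ is invertible, that is, ruling out torsion in the top exterior power, since the module-level analogue is false over a general ring and so this step must genuinely use the categorical structure. The difficulty is that the target ring $A$ of a general weakly Tannakian category is an arbitrary $k$-algebra---neither a domain nor a directed union of principal ideal domains, so $w(X)$ need not split as free plus torsion---whence I cannot reduce to Lemma~\ref{lemma:fiber_functor_to_filtered_colimit_of_PIDs} by a single base change; moreover localizing $w$ to reach a one-dimensional situation where Lemma~\ref{lemma:exterior_power_torsion} applies destroys faithfulness. I would attack this by running the determinant argument intrinsically: choose an epimorphism $V\twoheadrightarrow X$ with $V$ dualizable, take its kernel $V'$ in $\Ind(\ca{A})$, and use simplicity of $\otimes$-invertible objects to force the induced comparison maps on exterior powers to be isomorphisms. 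I expect the decisive reduction to be the use of the simple unit to replace $A$ by an integral ring---geometrically, Tannaka duality (Theorem~\ref{thm:tannaka_duality}) should identify the simple unit with integrality of the associated Adams stack and $d_0$ with its generic rank---after which the passage to the field of fractions in Lemma~\ref{lemma:fiber_functor_to_filtered_colimit_of_PIDs} becomes available and pins down the rank.
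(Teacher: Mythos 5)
Your reduction of the ``if'' direction to the claim that $\Lambda^{d_0}(X)$ is $\otimes$-invertible is reasonable, but the proposal stops exactly where the real work begins: you acknowledge that ruling out ``torsion'' in the top exterior power is the hard part, and the attack you sketch for it does not go through. The decisive move you propose---using the simple unit to ``replace $A$ by an integral ring,'' on the grounds that Tannaka duality should identify simplicity of the unit with integrality of the associated stack---is unjustified and in fact false at the level you need it: a weakly Tannakian category with simple unit can perfectly well have all of its fiber functors landing in modules over non-reduced rings with zero divisors. For instance $\Rep(\mu_p)$ over a field of characteristic $p$ has simple unit, and the forgetful-type fiber functor takes values in $\Mod_{k[t]/(t^p-1)}$, which is non-reduced; nothing about the simplicity of $O$ lets you pass to a domain, so the ``field of fractions'' step of Lemma~\ref{lemma:fiber_functor_to_filtered_colimit_of_PIDs} (which also needed the free-plus-torsion decomposition special to PIDs to identify the kernel of $w(p)$ as free of known rank) is not available. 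For the same reason your closing step, ``over a reduced ring a finitely presentable module of constant rank is projective,'' rests on a hypothesis you have no right to assume. So the argument has a genuine gap at its central point.

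The paper closes this gap with two ideas that are absent from your proposal. First, instead of trying to make $A$ integral, it localizes $A$ at an arbitrary prime and proves that the localized fiber functor is \emph{still faithful}: the right adjoint of $\Ind(w)$ sends $B$ to an algebra whose unit is a filtered colimit of maps $\eta_i \colon O \rightarrow A_i$ with $A_i$ dualizable (Proposition~\ref{prop:filtered_colimit_of_duals}), and simplicity of $O$ forces each $\eta_i^{\vee}$ to be an epimorphism, so this algebra is an Adams algebra and hence faithfully flat (Corollary~\ref{cor:adams}). This is where the simple-unit hypothesis does its essential work. Second, over the resulting local ring, Nakayama pins down the minimal number of generators $k$ of $M=w(A)$, the Flanders result identifies $E^k(M)$ with $\Lambda^k(M)$ as a cyclic module that is nonzero modulo $\mathfrak{m}$, and the remaining issue---that its annihilator is trivial---is settled not by any rank or determinant argument but by showing that the lax-closed comparison $W[X,Y] \rightarrow \Hom_R(WX,WY)$ is a monomorphism for finitely generated $X$ (again using simplicity of $O$ and that duals generate). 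If you want to salvage your outline, you would need to supply substitutes for both of these steps; as written, the invertibility of $\Lambda^{d_0}(X)$ is asserted rather than proved.
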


 The proof of this theorem is split into two parts. We first prove this in the special case where $\ca{A}$ admits a fiber functor whose target is a local ring. In the second part, we show that any weakly Tannakian category with a simple unit object admits such a fiber functor. The first part generalizes the argument of \cite[Th{\'e}or{\`e}me~5.4]{DELIGNE_SEMI_SIMPLE}, while the second is more technical and relies on the notion of Adams algebra introduced in \cite[\S 4.1]{SCHAEPPI_COLIMITS}.
 
 \begin{lemma}\label{lemma:local_ring}
 Let $\ca{A}$ be a weakly Tannakian category such that there exists a fiber functor $w \colon \ca{A} \rightarrow \Mod_R$ where $(R,\mathfrak{m})$ is a local ring. Then $\ca{A}$ is a Tannakian category.
 \end{lemma}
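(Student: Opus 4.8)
The plan is to verify the two conditions of Lemma~\ref{lemma:rigid_abelian}: that $\ca{A}$ is rigid and that the endomorphism ring of the unit object is a field. Since the unit object is simple (the standing assumption of this section), the field condition follows from the Eckmann--Hilton argument exactly as in the proof of Lemma~\ref{lemma:fiber_functor_to_filtered_colimit_of_PIDs}, so the real work is rigidity. For this I must show that every object $A$ has a dual, equivalently that $w(A)$ is finitely generated projective; as $R$ is local this means $w(A)$ is free. Here $w(A)$ is finitely presentable (the dualizable objects generate $\Ind(\ca{A})$ and $w$ is exact on ind-objects, as in Lemma~\ref{lemma:fiber_functor_to_filtered_colimit_of_PIDs}), and since $w$ is faithful and exact it reflects isomorphisms and zero objects; I will use both facts freely.

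The next step is an exterior-power reduction. Set $\kappa=R/\mathfrak{m}$, write $M\defl w(A)$, and let $n=\dim_{\kappa}(M\otimes_R\kappa)$ be the minimal number of generators of $M$ (Nakayama). Applying $E^{\bullet}$ to a surjection $R^n\to M$ yields a surjection $E^{n+1}(R^n)=0\to E^{n+1}(M)$, so $E^{n+1}(M)=0$ and hence $\Lambda^{n+1}(M)=0$; as $w(\Lambda^{n+1}A)\cong\Lambda^{n+1}(M)=0$, faithfulness forces $\Lambda^{n+1}A=0$. By Proposition~\ref{prop:regularly_alternate} the module $M$ is regularly $n$-alternate, so $w(\Lambda^n A)\cong\Lambda^n(M)\cong E^n(M)$; this is a quotient of $E^n(R^n)\cong R$, hence cyclic, say $E^n(M)\cong R/I$ with $I\subseteq\mathfrak{m}$, and it is nonzero because $E^n(M)\otimes_R\kappa\cong E^n(\kappa^n)\cong\kappa$. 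By Proposition~\ref{prop:projective_criterion} it then suffices to prove $I=0$, that is, that $\Lambda^n A$ is invertible (equivalently dualizable, since a dualizable cyclic module over a local ring is free of rank one).

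This reduces the statement to the claim that every $L\in\ca{A}$ with $w(L)$ cyclic (equivalently $\Lambda^2 L=0$) is either zero or invertible, $\Lambda^n A$ being such an object. To attack the claim I would take a dualizable cover $V\to L$ with $w(V)\cong R^N$ and kernel $W$, and exploit the Koszul-type right exact sequences $W\otimes\Lambda^{j-1}V\to\Lambda^j V\to\Lambda^j L\to 0$: the vanishing of $\Lambda^{\geq 2}L$ forces the invertible object $\Lambda^N V$ (which is \emph{simple}, because the unit is simple) to be a quotient of $W\otimes\Lambda^{N-1}V\cong W\otimes V^{\vee}\otimes\Lambda^N V$. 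The cases $N\leq 1$ are immediate, since $N=0$ gives $L=0$, and for $N=1$ the object $L$ is a quotient of the invertible, hence simple, object $V$, so $L$ is zero or isomorphic to $V$. The crux, and the main obstacle, is the case $N\geq 2$: a unimodular generator of $w(L)$ lifts to a rank-one free summand of $w(V)$, but the corresponding idempotent need not lift along $\End_{\ca{A}}(V)\to\End_R(w(V))$, so I cannot simply split off an invertible cover; and, unlike the principal ideal domain situation of Lemma~\ref{lemma:fiber_functor_to_filtered_colimit_of_PIDs}, the kernel $W$ need not be projective, so the determinant-of-the-kernel argument used there does not apply verbatim. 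Resolving this is precisely the point where the more careful analysis generalizing Deligne's computation for $K[t]/(t^p)$ is needed, and where the simplicity of the unit object is used decisively to exclude a nonzero object whose image under $w$ is a non-free cyclic module.
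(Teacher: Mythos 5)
Your argument tracks the paper's proof exactly up to the key reduction: Eckmann--Hilton for the field condition, Nakayama to get the minimal number $n$ of generators of $M=w(A)$, and Propositions~\ref{prop:regularly_alternate} and~\ref{prop:projective_criterion} to reduce everything to showing that the non-zero cyclic module $E^n(M)\cong\Lambda^n(M)\cong R/I$ has $I=0$. (A small caveat here: $\Lambda^n(M)$ is a priori only finitely generated, not finitely presentable, so $\Lambda^n A$ need not lie in $\ca{A}$; the statement you need should be formulated for finitely generated objects of $\Ind(\ca{A})$.) From that point on, however, you do not have a proof: you yourself flag the case $N\geq 2$ of your Koszul-type argument as an unresolved obstacle, and that case is exactly where the entire content of the lemma sits. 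Trying to prove that every object with cyclic image is zero or invertible is strictly harder than what is needed, and the difficulties you correctly identify (non-liftable idempotents, non-projective kernels) are real.

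The idea you are missing is that one can kill the annihilator of $\Lambda^n(M)$ directly, without ever producing an invertible object. Write $W=\Ind(w)$ and let $X\in\Ind(\ca{A})$ be any non-zero finitely generated object. The annihilator of $WX$ is the kernel of $R\rightarrow\End_R(WX)$, and since $W$ is symmetric strong monoidal between closed categories it is lax closed, so by the coherence axioms this map factors as
\[
R\cong W(O)\xrightarrow{\;W(j)\;} W[X,X]\xrightarrow{\;\overline{W}\;}\Hom_R(WX,WX),
\]
where $j\colon O\rightarrow[X,X]$ is the adjunct of the unit isomorphism. Simplicity of $O$ forces the kernel of $j$ to be $0$ or $O$, and the latter would give $X\cong 0$; so $W(j)$ is a monomorphism by exactness of $W$. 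For $\overline{W}$, choose an epimorphism $X'\rightarrow X$ with $X'$ dualizable in $\ca{A}$; naturality of $\overline{W}$ and exactness of $W$ reduce its injectivity to the case where $X$ itself has a dual, where $\overline{W}$ is an isomorphism. Applying this to $X=\Lambda^n(A)$, a non-zero finitely generated quotient of $A^{\otimes n}$, gives $I=0$ and finishes the proof. Note that this argument uses the simplicity of the unit only through the monomorphy of $j$, and it never needs the cyclic object to be dualizable --- which is precisely the step your approach requires and cannot supply.
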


\begin{proof}
 The endomorphism ring of the unit of a monoidal category is always commutative by the Eckmann--Hilton argument. The assumption that the unit is simple thus implies that the endomorphism ring is a field. To show that $\ca{A}$ is Tannakian, it only remains to show that it is rigid (see Lemma~\ref{lemma:rigid_abelian}). This amounts to checking that the $R$-module $M \defl w(A)$ is finitely generated free for any $A \in \ca{A}$. Since $M$ is finitely presentable, the quotient $M \slash \mathfrak{m} M$ is a $k$-dimensional vector space over the residue field of $R$ for some $k \geq 1$. If $M \neq 0$, which we henceforth assume, then we have $k > 0$.
 
 By Nakayama, $M$ is generated by $k$ elements and thus the $k$-th exterior power $E^{k}(M)$ is a cyclic module. Note that $R \slash \mathfrak{m} \ten{R} E^{k}(M) \cong R \slash \mathfrak{m}$, so $E^{k}(M)$ is in particular non-zero.
 
 Recall from Proposition~\ref{prop:regularly_alternate} that any module generated by $k$ elements is regularly $k$-alternate, that is, the comparison morphism $E^{k}(M) \rightarrow \Lambda^{k}(M)$ is an isomorphism. Moreover, from Proposition~\ref{prop:projective_criterion} we know that $M$ is projective (hence free) if $E^{k}(M)$ is. We have thus reduced the problem to checking that $\Lambda^{k}(M) \cong \Ind(w)\bigl(\Lambda^{k}(A) \bigr)$ is free of rank one, where $\Ind(w)$ denotes the extension of $w$ to ind-objects. Note that, a priori, we only know that $\Lambda^{k}(M)$ is finitely generated (as opposed to finitely presentable). Nevertheless, since $\Lambda^{k}(M) \cong R \slash I$ for some (possibly infinitely generated) ideal $I \subseteq R$, it only remains to check that the annihilator of $\Lambda^{k}(M)$ is trivial.
 
 To see this, recall that $\Ind(\ca{A})$ is a symmetric monoidal closed category since tensoring with any object commutes with small colimits. We denote the unit object by $O$ and the internal hom by $[-,-]$. We let $W \defl \Ind(w) \colon \Ind(\ca{A}) \rightarrow \Mod_R$ denote the extension of $w$ to ind-objects. Using this notation, our aim is to prove that $W\bigl(\Lambda^{k}(A) \bigr)$ has trivial annihilator. By construction, we have an epimorphism $A^{\otimes k} \rightarrow \Lambda^{k}(A)$, so it suffices to check that $W(X)$ has a trivial annihilator for any non-zero finitely generated object $X \in \Ind(\ca{A})$.
 
 Any symmetric strong monoidal functor between symmetric monoidal closed categories such as $W$ is in particular \emph{lax closed}. Thus there is a natural transformation $\overline{W} \colon W[X,Y] \rightarrow \Hom_R(WX,WY)$ subject to various coherence axioms, one of which says that the diagram
 \[
 \xymatrix{
 W(O) \ar[d]_{\cong} \ar[r]^-{W(j)} & W[X,X] \ar[d]^{\overline{W}} \\
 R \ar[r] & \Hom_R(WX,WX) 
 }
 \]
 is commutative for any $X \in \Ind(\ca{A})$, where $j$ denotes the adjunct of the unit isomorphism $X \otimes O \cong X$. Since the unit object $O$ is simple, the kernel of $j$ must be either zero or $O$. The latter happens if and only if $X \cong 0$. Thus, for non-zero $X$, the morphism $W(j)$ is a monomorphism. This reduces the problem to checking that $\overline{W} \colon W[X,Y] \rightarrow \Hom(WX,WY)$ is a monomorphism for any finitely generated $X \in \Ind(\ca{A})$.
 
 By Definition~\ref{dfn:weakly_tannakian}, there exists an epimorphism $p \colon X^{\prime} \rightarrow X$ for some object $X^{\prime} \in \ca{A}$ with a dual. Since $W$ is exact, both horizontal morphisms in the naturality diagram
 \[
 \xymatrix{
 W[X,Y] \ar[r]^-{W(p^{\ast})} \ar[d]_{\overline{W}} & W[X^{\prime},Y] \ar[d]^{\overline{W}} \\
 \Hom_R(WX,WY) \ar[r]_{(Wp)^{\ast}} & \Hom_R(WX^{\prime},WY)
 }
 \]
 are monomorphisms. This reduces the problem to the case where $X$ itself has a dual. But in this case, it is well known that the comparison homomorphism $\overline{W} \colon W[X,Y] \rightarrow \Hom_R(WX,WY)$ is in fact an isomorphism.
\end{proof}

\begin{rmk}
 The only place in the above proof where we made use of the fact that $R$ is local was in the implication that an $R$-module $M$ with minimal generating set of finite cardinality $k>0$ satisfies $E^k(M) \neq 0$. This does not hold for more general rings, even if $M$ is locally free of finite rank (for example, if $M$ is a non-principal ideal in a Dedekind domain). 
\end{rmk}

 The following proposition is closely related to \cite[Theorem~1.3.1]{SCHAEPPI_STACKS}. We will apply it in the case where $\ca{C}=\Ind(\ca{A})$ for a weakly Tannakian category $\ca{A}$.

 \begin{prop}\label{prop:filtered_colimit_of_duals}
 Let $\ca{C}$ be an abelian locally finitely presentable symmetric monoidal closed category whose unit object is finitely presentable. Assume that the full subcategory $\ca{C}^{d}$ of objects with a dual is a generator of $\ca{C}$. Let $B$ be a commutative ring and let $F \colon \ca{C} \rightarrow \Mod_B$ be a symmetric strong monoidal left adjoint, with right adjoint $U$. If $F$ is exact, then $U(B)$ is a filtered colimit of objects with duals.
 \end{prop}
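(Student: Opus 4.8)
The plan is to realize $U(B)$ as the colimit of the comma category $I \defl \ca{C}^{d} / U(B)$ of objects with duals equipped with a map to $U(B)$, to show that $I$ is filtered, and to prove that the canonical comparison $\phi \colon \colim_{(X,f) \in I} X \rightarrow U(B)$ is an isomorphism. This is the exact analogue, internal to $\ca{C}$, of the Govorov--Lazard description of a flat module as a filtered colimit of finitely generated free modules, and the hypothesis that $F$ is exact will play the role of flatness. Two preliminaries make this workable. First, every object $X$ with a dual is finitely presentable: since $\ca{C}$ is monoidal closed the functor $\ldual{X} \otimes -$ preserves filtered colimits, and $\ca{C}(X,-) \cong \ca{C}(O, \ldual{X} \otimes -)$ does as well because the unit $O$ is finitely presentable; in particular $\ca{C}^{d}$ is essentially small, so $I$ is a legitimate index. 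Second, since $F$ is strong monoidal it preserves duals, so for such $X$ the module $FX$ is finitely generated projective and the adjunction $F \dashv U$ yields a natural isomorphism $\ca{C}(X, U(B)) \cong \Hom_B(FX, B) \cong F(\ldual{X})$, which I will use to transport maps into $U(B)$ along the exact functor $F$.

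The crux is the following factorization statement, which is precisely the coequalizer clause in the definition of a filtered category: given $Y \in \ca{C}^{d}$ with $g \colon Y \rightarrow U(B)$ and $h_0 \colon X \rightarrow Y$ with $X \in \ca{C}^{d}$ and $g h_0 = 0$, there are $Z \in \ca{C}^{d}$, a morphism $c \colon Y \rightarrow Z$ with $c h_0 = 0$, and $h \colon Z \rightarrow U(B)$ with $h c = g$. To produce these I would transport $g$ to the corresponding element $\gamma \in F(\ldual{Y})$. The relation $g h_0 = 0$ becomes $F(\ldual{h_0})(\gamma) = 0$, so by exactness of $F$ the element $\gamma$ lies in $F(W)$ for $W \defl \ker(\ldual{h_0} \colon \ldual{Y} \rightarrow \ldual{X})$. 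Writing $W$ as the filtered union of its finitely generated subobjects and using that $F$ preserves filtered colimits while $B$ is finitely presentable, $\gamma$ already comes from $F(W_\alpha)$ for some finitely generated $W_\alpha \subseteq W$. Since $\ca{C}^{d}$ generates and $W_\alpha$ is finitely generated, there is an epimorphism from an object with a dual $D \twoheadrightarrow W_\alpha \hookrightarrow \ldual{Y}$; dualizing the resulting map $d \colon D \rightarrow \ldual{Y}$ gives $c \colon Y \rightarrow Z \defl \ldual{D}$. As the composite $\ldual{h_0} \circ d \colon D \rightarrow \ldual{X}$ vanishes, dualizing (which is faithful on $\ca{C}^{d}$) gives $c h_0 = 0$, and lifting $\gamma$ through the epimorphism $FD \twoheadrightarrow F(W_\alpha)$ produces the element of $F(\ldual{Z}) \cong \ca{C}(Z, U(B))$ defining the required $h$ with $h c = g$.

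Granting this, $I$ is filtered: it is nonempty and admits binary cocones because $\ca{C}^{d}$ is closed under finite direct sums, and the factorization statement supplies coequalizing cocones for parallel pairs, since a parallel pair $a, b \colon (X,f) \rightarrow (Y,g)$ reduces by additivity to $h_0 = a - b$ with $g h_0 = 0$. It then remains to see that $\phi \colon P \defl \colim_{(X,f) \in I} X \rightarrow U(B)$ is an isomorphism. It is an epimorphism because $\ca{C}^{d}$ generates, so the images of the structure maps $X \rightarrow U(B)$ already cover $U(B)$. For monomorphy, if $\ker \phi$ were nonzero there would be a nonzero map $n \colon D \rightarrow \ker \phi$ with $D \in \ca{C}^{d}$; as $D$ is finitely presentable and $P$ is a filtered colimit, the composite $D \rightarrow P$ factors through a coprojection $D \xrightarrow{s} X_0 \rightarrow P$ with $f_0 s = 0$ for the structure map $f_0 \colon X_0 \rightarrow U(B)$. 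Applying the coequalizer clause to the pair $s, 0 \colon (D,0) \rightarrow (X_0, f_0)$ yields a morphism in $I$ killing $s$, whence the coprojection identity forces $D \rightarrow P$ to vanish, a contradiction.

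Thus $\phi$ is an isomorphism and $U(B) \cong \colim_{(X,f) \in I} X$ exhibits $U(B)$ as a filtered colimit of objects with duals, as desired. I expect the factorization statement of the second paragraph to be the only genuine obstacle: it is the Tannakian incarnation of the equational criterion for flatness, and it is exactly there that exactness of $F$ (together with the facts that $F$ preserves duals and filtered colimits and that $\ca{C}^{d}$ generates) is essential. The remaining steps are the standard bookkeeping of the Govorov--Lazard argument.
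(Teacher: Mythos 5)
Your proof is correct. Its heart --- the factorization statement, which after transporting along $\ca{C}(X,U(B)) \cong F(\ldual{X})$ says that an element of $F(\ldual{Y})$ killed by $F(\ldual{h_0})$ already comes from an object with a dual mapping into $\ker(\ldual{h_0})$ --- is exactly the step the paper isolates as well: there the slice $\bigl(\ca{C}^d \downarrow U(B)\bigr)$ is identified via the adjunction with the category of elements of $w(-)^{\vee}$, and the equalizing axiom of cofilteredness is verified using exactness of $F$ together with the fact that the duals jointly surject onto the relevant kernel. Where you diverge is in how you conclude that $U(B)$ is actually the colimit of this filtered diagram: the paper invokes the fact that $\ca{C}^d$ is a \emph{dense} generator (citing Day--Street), so that every object is automatically the colimit of its slice and only filteredness needs checking, whereas you prove by hand that the comparison $\colim_{I} X \rightarrow U(B)$ is an isomorphism, using generation for the epimorphism part and your factorization statement together with the finite presentability of dualizable objects for the monomorphism part. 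Your route costs you the (correctly established) preliminaries that objects with duals are finitely presentable and that $\ca{C}^d$ is essentially small, but it buys a self-contained Govorov--Lazard-style argument that avoids the density theorem. Both work; I see no gaps.
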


\begin{proof}
 Write $w \colon \ca{C}^d \rightarrow \Mod_B$ for the restriction of $F$ to $\ca{C}^d$. Since the objects with duals form a dense generator by \cite[Theorem~2 and Example~(3)]{DAY_STREET_GENERATORS}, we only need to check that the slice category $\bigl(\ca{C}^d \downarrow U(B) \bigr)$ is filtered. Since $F$ is left adjoint to $U$, it follows that this slice category is equivalent to $(w \downarrow B)$. Moreover, since $w$ takes values in finitely generated projective $B$-modules, we obtain an equivalence $(w \downarrow B)\simeq \bigl(B \downarrow w(-)^{\vee}\bigr)^{\op} \simeq \el\bigl(w(-)^{\vee}\bigr)$ by composition with the contravariant equivalence $(-)^{\vee}$ which takes a finitely generated projective $B$-module to its dual. It thus remains to check that the category of elements of $w(-)^{\vee}$ is cofiltered.
 
 It is non-empty since it contains the zero-element of the zero object, and it clearly has finite direct sums. It only remains to check that any pair of morphisms in the category of elements is equalized on the left by some (not necessarily unique) morphism. This boils down to checking that any element in the kernel of $w(f)^{\vee} \colon w(V^{\prime})^{\vee} \rightarrow w(V)^{\vee}$ (where $f$ is an arbitrary morphism $V \rightarrow V^{\prime}$) lies in the image of the morphism $w(g)^{\vee} \colon w(V^{\prime\prime})^{\vee} \rightarrow w(V^{\prime})^{\vee}$ for some $g \colon V^{\prime} \rightarrow V^{\prime \prime}$. This follows from the exactness of $F$ and the fact that the duals form a generator of $\ca{C}$, which implies that they jointly surject onto the kernel of $f^{\vee}$.
\end{proof}

 Recall that we call a commutative algebra $A$ in a Grothendieck abelian symmetric monoidal closed category $(\ca{C},\otimes,O)$ an \emph{Adams algebra} if the unit $\eta \colon O \rightarrow A$ can be written as a filtered colimit of morphisms $\eta_i \colon O \rightarrow A_i$, $i \in I$, where each $A_i$ has a dual and the dual morphism $\eta_i^{\vee} \colon A_i^{\vee} \rightarrow O$ are epimorphisms for all $i \in I$. Using exactness of filtered colimits and the internal hom, one can show that any Adams algebra is faithfully flat, that is, the functor $A \otimes - \colon \ca{C} \rightarrow \ca{C}$ is faithful and exact (see \cite[\S 4.1]{SCHAEPPI_COLIMITS} for details).
 
 \begin{cor}\label{cor:adams}
 Let $\ca{C}$ be as in Proposition~\ref{prop:filtered_colimit_of_duals}, and let $B$ be a non-zero commutative ring. If the unit object $O$ of $\ca{C}$ is simple, then any exact symmetric strong monoidal left adjoint $F \colon \ca{C} \rightarrow \Mod_B$ is faithful.
 \end{cor}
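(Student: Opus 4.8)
The plan is to realise $U(B)$ as an Adams algebra and then to exploit its faithful flatness, reducing the faithfulness of $F$ to a statement about the simple unit. Since $F$ is symmetric strong monoidal with right adjoint $U$, the functor $U$ is lax symmetric monoidal, so $U(B)$ carries the structure of a commutative algebra in $\ca{C}$ whose unit $\eta \colon O \to U(B)$ is the unit of the adjunction at $O$ (identified with the algebra unit via $F(O) \cong B$). First I would check that $\eta \neq 0$: by the triangle identity the composite of $F(\eta)$ with the counit $FU(B) \to B$ is the identity of $B$, which is nonzero because $B \neq 0$; hence $F(\eta) \neq 0$ and a fortiori $\eta \neq 0$.

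By Proposition~\ref{prop:filtered_colimit_of_duals} I may write $U(B)$ as a filtered colimit $\colim_i A_i$ of objects with duals. As $O$ is finitely presentable, $\eta$ factors through some $A_{i_0}$; passing to the (filtered and cofinal) coslice category under $i_0$, this factorization assembles into a filtered system of morphisms $\eta_i \colon O \to A_i$ whose colimit is $\eta$ and in which each $\eta_i$ is nonzero, since its composite with the colimit inclusion is $\eta \neq 0$. Because $A_i$ has a dual, the duality isomorphism $\ca{C}(O,A_i) \cong \ca{C}(A_i^{\vee},O)$ sends $\eta_i$ to its transpose $\eta_i^{\vee} \colon A_i^{\vee} \to O$, which is therefore nonzero as well; and since $O$ is simple a nonzero morphism into $O$ is an epimorphism. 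Thus $U(B)$ is an Adams algebra, and the faithful flatness of Adams algebras shows that $U(B) \otimes -$ is faithful and exact.

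It remains to deduce that $F$ is faithful. Since $F$ is exact, it suffices to show that $F$ reflects zero objects, i.e. that $F(Y) = 0$ forces $Y = 0$. The tempting route is through the projection morphism $U(B) \otimes Y \to UF(Y)$, which is an isomorphism when $Y$ has a dual and so, by faithful flatness of $U(B)$, settles the dualizable case at once; but this morphism is not invertible for general $Y$, and overcoming this is the main obstacle. I would bypass it using the fact that $\ca{C}^{d}$ is a generator together with the simplicity of $O$. Given $Y \neq 0$, I choose a dualizable object $X$ and a nonzero morphism $X \to Y$ (possible because $\ca{C}^{d}$ generates), and transpose it across the duality $\ca{C}(X,Y) \cong \ca{C}(O, X^{\vee} \otimes Y)$ to obtain a nonzero morphism $\tilde{g} \colon O \to X^{\vee} \otimes Y$. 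Since $O$ is simple, $\tilde{g}$ is a monomorphism, and applying the exact strong monoidal functor $F$ produces a monomorphism $B \cong F(O) \to F(X^{\vee}) \otimes_B F(Y)$; as $B \neq 0$ the target is nonzero, whence $F(Y) \neq 0$.

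I expect the genuinely delicate point to be precisely this passage from dualizable to arbitrary objects. It is worth observing that the argument of the previous paragraph uses neither the adjoint $U$ nor the Adams structure: simplicity of $O$, generation by objects with duals, exactness, and strong monoidality already force $F$ to be faithful once $B \neq 0$, so one could present the proof directly in that form. The route through the Adams algebra is nonetheless the one suggested by the placement of Proposition~\ref{prop:filtered_colimit_of_duals}, and it has the advantage of recording that $U(B) \otimes -$ is faithfully flat, which is the structural fact one wants to retain.
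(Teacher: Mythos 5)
Your proof is correct, and it diverges from the paper's in an interesting way. The first half (realising $U(B)$ as an Adams algebra) is exactly the paper's argument: the paper also invokes Proposition~\ref{prop:filtered_colimit_of_duals}, and uses simplicity of $O$ to force each $\eta_i^{\vee}$ to be an epimorphism; the only cosmetic difference is how nonvanishing of the $\eta_i$ is certified (the paper argues that $\eta_i=0$ would give $U(B)\cong 0$, contradicting $\Mod_B\not\simeq 0$, whereas you use a triangle identity -- both are fine). The real divergence is the last step. The paper cites a result to the effect that $F$ is equivalent to the free $U(B)$-module functor $U(B)\otimes -$, so that faithful flatness of the Adams algebra $U(B)$ immediately gives faithfulness of $F$; this is precisely the projection-formula-type identification you flag as delicate, and the paper resolves it by citation rather than by hand. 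Your bypass -- taking $Y\neq 0$, finding a nonzero map $X\to Y$ from a dualizable generator, transposing to a monomorphism $O\to X^{\vee}\otimes Y$ (using simplicity of $O$), and applying the exact strong monoidal $F$ -- is valid and, as you observe, makes the Adams-algebra machinery logically unnecessary for the bare faithfulness statement; it also uses fewer hypotheses (no left adjoint, no local finite presentability). What the paper's route buys is the stronger structural conclusion that $F$ is identified with tensoring by a faithfully flat (Adams) algebra, which is the form in which the statement fits into the surrounding Tannakian framework; what your route buys is a shorter, self-contained proof of exactly what the corollary asserts, which would in fact suffice for its one application in the proof of Theorem~\ref{thm:simple_unit_iff_tannakian}.
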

 
 \begin{proof}
 Let $U$ be the right adjoint of $F$ and let $A \defl U(B)$ be the commutative algebra in $\ca{C}$ obtained from the trivial $B$-algebra $B$. By \cite[Proposition~3.9]{SCHAEPPI_GEOMETRIC}, the functor $F$ is equivalent to the free $A$-module functor, so it suffices to check that $A \otimes -$ is faithful and exact.
 
 From Proposition~\ref{prop:filtered_colimit_of_duals}, it follows that the unit of $A$ is a filtered colimit of morphisms $\eta_i \colon O \rightarrow A_i$, $i \in I$, where each $A_i$ has a dual. We claim that the dual morphisms $\eta_i^{\vee}$ are epimorphisms.
 
 Since $O$ is simple, $\eta_i^{\vee}$ is either an epimorphism or zero. Assume $\eta_i^{\vee}=0$. Then its dual $\eta_i$ would also be equal to zero. Since the unit of $A$ factors through $\eta_i$, this implies that the unit of $A$ is zero and thus that $A \cong 0$. This contradicts the fact that the category of $A$-modules in $\ca{C}$ is equivalent to the non-zero category $\Mod_B$. Thus $\eta_i^{\vee}$ is indeed an epimorphism for all $i \in I$. Therefore $A$ is an Adams algebra, so $A \otimes - \colon \ca{C} \rightarrow \ca{C}$ is faithful by \cite[Proposition~4.1.6]{SCHAEPPI_COLIMITS}.
 \end{proof}
 
 With this in hand, it is now easy to show that a weakly Tannakian category with simple unit object is Tannakian.
 
 \begin{proof}[Proof of Theorem~\ref{thm:simple_unit_iff_tannakian}]
 Let $\ca{A}$ be a weakly Tannakian category with simple unit object and fiber functor $w \colon \ca{A} \rightarrow \Mod_B$. Since the unit object is in particular non-zero, we must have $B \neq 0$ by faithfulness of $w$. Let $R=B_{\mathfrak{p}}$ be the localization of $B$ at any prime ideal $\mathfrak{p} \subsetneq B$. Then the composite
 \[
 \xymatrix{\Ind(\ca{A}) \ar[r]^-{\Ind(w)} & \Mod_B \ar[r]^-{R \ten{B} -} & \Mod_R }
 \]
 is an exact symmetric strong monoidal left adjoint. By Corollary~\ref{cor:adams}, it is therefore also faithful. Thus $\ca{A}$ admits a fiber functor with target the category of modules over a local ring. From Lemma~\ref{lemma:local_ring} we know that any weakly Tannakian category with a simple unit object which admits such a fiber functor is Tannakian.
 \end{proof}

\subsection{Applications}\label{section:applications}

 We conclude with two applications of the new characterization of Tannakian categories given in Theorem~\ref{thm:simple_unit_iff_tannakian}. We call an $\fpqc$-gerbe a \emph{Tannakian gerbe} if it has an affine diagonal and an $\fpqc$-cover by an affine scheme. These are precisely the gerbes $G$ for which $\Coh(G)$ is a Tannakian category.
 
 \begin{prop}
 Any minimal closed substack $Z \subseteq X$ of an Adams stack $X$ is a Tannakian gerbe. In particular, any non-empty Adams stack contains a Tannakian gerbe as a closed substack.
 \end{prop}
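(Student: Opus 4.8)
The plan is to transport the statement to the categorical side via Tannaka duality (Theorem~\ref{thm:tannaka_duality}) and then to invoke the characterization of Tannakian categories proved in Theorem~\ref{thm:simple_unit_iff_tannakian}. First I would check that a closed substack $Z \subseteq X$ of an Adams stack is again an Adams stack. It is quasi-compact, being closed in a quasi-compact stack, and semi-separated, since a closed immersion $i \colon Z \to X$ is affine and hence the diagonal of $Z$ remains affine. For the generation condition, recall that $i_{\ast}$ is fully faithful with $i^{\ast} i_{\ast} \cong \id$ and that $i^{\ast}$ preserves colimits; writing any $\mathcal{F} \in \QCoh(Z)$ as $i^{\ast} i_{\ast} \mathcal{F}$ and expressing $i_{\ast}\mathcal{F}$ as a colimit of vector bundles on $X$ exhibits $\mathcal{F}$ as a colimit of their restrictions to $Z$, which are again vector bundles. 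Thus $\QCoh_{\fp}(Z)$ is weakly Tannakian.

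The heart of the argument is to read off simplicity of the unit object from minimality. A minimal closed substack $Z$ is in particular nonempty, so $\mathcal{O}_Z \neq 0$ and the unit object of $\QCoh_{\fp}(Z)$ is nonzero. Any subobject $\mathcal{I} \subseteq \mathcal{O}_Z$ in $\QCoh_{\fp}(Z)$ is in particular a quasi-coherent ideal sheaf and hence defines a closed substack $V(\mathcal{I}) \subseteq Z$, which is nonempty exactly when $\mathcal{I} \neq \mathcal{O}_Z$ and properly contained in $Z$ exactly when $\mathcal{I} \neq 0$. A proper nonzero subobject would therefore yield a nonempty closed substack strictly inside $Z$, contradicting minimality. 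Hence the unit object of $\QCoh_{\fp}(Z)$ is simple, and Theorem~\ref{thm:simple_unit_iff_tannakian} shows that $\QCoh_{\fp}(Z)$ is Tannakian. Since every Tannakian category is of the form $\Coh(G)$ for a Tannakian gerbe $G$ (see the corollary following Theorem~\ref{thm:scalar_extension}) and Tannaka duality identifies $Z$ with the stack attached to $\QCoh_{\fp}(Z)$, the stack $Z$ is a Tannakian gerbe.

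For the final assertion I would produce a minimal nonempty closed substack of a nonempty Adams stack $X$ by Zorn's lemma, crucially using that the unit object $\mathcal{O}_X$ is finitely presentable in $\QCoh(X)$. Ordering nonempty closed substacks by inclusion, a chain corresponds to an ascending family of proper ideal subobjects $\mathcal{I}_\alpha \subsetneq \mathcal{O}_X$, and the intersection of the chain corresponds to the filtered union $\bigcup_\alpha \mathcal{I}_\alpha$. Were this union equal to $\mathcal{O}_X$, finite presentability would force $\id_{\mathcal{O}_X}$ to factor through some $\mathcal{I}_\alpha$, making it improper; hence the union is proper and the intersection is again a nonempty closed substack, providing a lower bound for the chain. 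Zorn's lemma then yields a minimal element, which is a Tannakian gerbe by the first part.

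The step I expect to require the most care is the geometric-categorical dictionary underlying the first two paragraphs: verifying cleanly that $Z$ is an Adams stack and that subobjects of the unit object of $\QCoh_{\fp}(Z)$ correspond to closed substacks in a way that turns minimality precisely into simplicity. Once this dictionary is in place, the Tannakian conclusion is an immediate consequence of Theorem~\ref{thm:simple_unit_iff_tannakian} and the gerbe statement obtained from Tannaka duality.
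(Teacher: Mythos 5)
Your proposal is correct and follows essentially the same route as the paper: translate minimality of $Z$ into simplicity of the unit object of $\QCoh_{\fp}(Z)$ via the correspondence between ideal subobjects and closed substacks, conclude with Theorem~\ref{thm:simple_unit_iff_tannakian}, and obtain the existence statement from Zorn's lemma together with finite presentability of $O_X$. The only difference is that you spell out the (correct, and implicitly assumed in the paper) verification that a closed substack of an Adams stack is again an Adams stack.
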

 
 \begin{proof}
 It suffices to show that the category $\QCoh_{\fp}(Z)$ is Tannakian. By Theorem~\ref{thm:simple_unit_iff_tannakian}, this amounts to checking that the unit object $O_Z$ is simple. Equivalently, we need to show that the ideal defining $Z$ is a maximal subobject of $O_X$. If it were not, then $Z$ would contain a proper non-empty closed substack, contradicting the minimality of $Z$.
 
 The second claim follows from the existence of maximal subobjects of $O_X$, which is a consequence of Zorn's lemma and the fact that $O_X$ is a finitely presentable object of $\QCoh(X)$.
 \end{proof}
 
 The second application concerns the universal weakly Tannakian categories introduced in \cite[\S 5.6]{SCHAEPPI_COLIMITS}. From now on, we fix a commutative ring $R$ and assume that all categories and functors are $R$-linear unless stated otherwise. A finitely cocomplete symmetric monoidal category $\ca{B}$ is called \emph{right exact} if tensoring with any fixed object preserves finite colimits. For any such category $\ca{B}$, there exists a weakly Tannakian category $T(\ca{B})$ and a symmetric strong monoidal linear functor $E \colon T(\ca{B}) \rightarrow \ca{B}$ which is universal in the sense that any symmetric strong monoidal functor $\ca{A} \rightarrow \ca{B}$ factors essentially uniquely through $E$. Our second application gives conditions for the universal weakly Tannakian category to be Tannakian.
 
 We briefly recall the construction of $T(\ca{B})$. To do this, we need the notion of a \emph{locally free object of constant finite rank} and a \emph{locally split epimorphism}. We define these for locally finitely presentable abelian symmetric monoidal categories $\ca{C}$ (and then apply this in the case $\ca{C}=\Ind(\ca{B})$; in particular, we make the additional assumption that $\ca{B}$ is ind-abelian). 
 
 An object $V \in \ca{C}$ with a dual is called \emph{locally free of rank $d$} for some $d \in \mathbb{N}$ if there exists a faithfully flat commutative algebra $B \in \ca{C}$ such that $B \otimes V$ is isomorphic to $B^{\oplus d}$ in the category of $B$-modules in $\ca{C}$. A morphism $p \colon V \rightarrow W$ is called a \emph{locally split epimorphism} if there exists an algebra $B$ as above such that $B \otimes p$ is a split epimorphism of $B$-modules in $\ca{C}$. Locally split right exact sequences are defined similarly, see \cite[Definition~5.4.1]{SCHAEPPI_COLIMITS} for a precise definition.
 
 We write $\ca{B}_{\ell f}$ for the full subcategory of $\ca{B}$ consisting of the locally free objects of $\Ind(\ca{B})$ of constant finite rank and we let $\Sigma$ be the set of locally split right exact sequences in $\Ind(\ca{B})$ between objects in $\ca{B}_{\ell f}$. Then the category $\Lex_{\Sigma}[\ca{B}_{\ell f}^{\op},\Mod_R]$ of presheaves on $\ca{B}_{\ell f}$ which send sequences in $\Sigma$ to  left exact sequences of $R$-modules is reflective in the category of all presheaves. In fact, the locally split epimorphisms induce an enriched Grothendieck topology on the category of $R$-linear presheaves whose category of $R$-linear sheaves is precisely $\Lex_{\Sigma}[\ca{B}_{\ell f}^{\op},\Mod_R]$. The category $T(\ca{B})$ is the closure of $\ca{B}_{\ell f}$ in $\Lex_{\Sigma}$ under finite colimits, see \cite[Definition~5.6.3]{SCHAEPPI_COLIMITS}.
 
 \begin{thm}\label{thm:universal}
 Let $\ca{B}$ be an ind-abelian right exact symmetric monoidal $R$-linear category. If the unit object of $\ca{B}$ is simple, then the universal weakly Tannakian category $T(\ca{B})$ is Tannakian. Moreover, the functor $E \colon T(\ca{B}) \rightarrow \ca{B}$ is exact, full and faithful, and its essential image consists of the locally free objects of constant rank $\ca{B}_{\ell f} \subseteq \ca{B}$.
 \end{thm}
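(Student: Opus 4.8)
The plan is to deduce the theorem from the characterization in Theorem~\ref{thm:simple_unit_iff_tannakian} after a close analysis of the realization functor $E$ and its extension to ind-objects. Since $T(\ca{B})$ is weakly Tannakian by construction (see \cite[\S 5.6]{SCHAEPPI_COLIMITS}) and $E$ is symmetric strong monoidal, $E$ carries the unit object of $T(\ca{B})$ to the unit object $O$ of $\ca{B}$; moreover the unit of $T(\ca{B})$ is exactly the image of $O \in \ca{B}_{\ell f}$ under the embedding $\ca{B}_{\ell f} \hookrightarrow T(\ca{B})$, $O$ being dualizable and locally free of rank one. By Theorem~\ref{thm:simple_unit_iff_tannakian}, the first assertion therefore reduces to showing that the unit object of $T(\ca{B})$ is simple, and the strategy is to transport the simplicity of $O \in \ca{B}$ across $E$.

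The central input is that the extension $\Ind(E) \colon \Ind(T(\ca{B})) \to \Ind(\ca{B})$ is exact and faithful. First I would identify $\Ind(T(\ca{B}))$ with the sheaf category $\Lex_{\Sigma}$ for the locally split topology on $\ca{B}_{\ell f}$, so that $\Ind(E)$ is the essentially unique cocontinuous functor restricting to the inclusion $\ca{B}_{\ell f} \hookrightarrow \Ind(\ca{B})$; being a left adjoint it is automatically right exact. The hard part will be left exactness: here I would exploit that the objects of $\ca{B}_{\ell f}$ have duals and that the covers are locally split, so that the sheaf condition is precisely engineered to make realization preserve the relevant kernels. This is the same mechanism as in Proposition~\ref{prop:filtered_colimit_of_duals}, where exactness of a strong monoidal left adjoint is tied to the duals forming a generator. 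For faithfulness I would use the simple unit of $\ca{B}$ through an Adams-algebra argument in the spirit of Corollary~\ref{cor:adams}, ruling out the degenerate possibility that a nonzero object is collapsed by $\Ind(E)$. (Alternatively, one may bypass this step for the simplicity claim by computing subobjects of the representable unit sheaf directly: sections over $O$ form the field $\End_{\ca{B}}(O)$, and the locally split covers propagate vanishing or fullness, so the unit sheaf admits no proper nonzero subobject.)

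Granting that $\Ind(E)$ is exact and faithful, simplicity of the unit of $T(\ca{B})$ follows formally: any subobject $S$ of the unit is sent to a subobject $\Ind(E)(S)$ of $O$, with $\Ind(E)(S)=0$ exactly when $S=0$ and $\Ind(E)(S)=O$ exactly when $S$ is the whole unit, since $\Ind(E)$ is exact and faithful; as $O$ is simple there are no intermediate subobjects, so the unit of $T(\ca{B})$ is simple. By Theorem~\ref{thm:simple_unit_iff_tannakian}, $T(\ca{B})$ is then Tannakian, and in particular rigid.

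Finally, for the description of $E$: exactness and faithfulness have just been secured, and full faithfulness I would obtain by reducing, via the rigidity of $T(\ca{B})$, to the behaviour on global sections and on the dualizable generators $\ca{B}_{\ell f}$, where $E$ is the identity and the comparison map on internal homs is an isomorphism (exactly as in the dualizable case treated in Lemma~\ref{lemma:local_ring}). For the essential image, $E$ sends the dualizable objects of the now-rigid $T(\ca{B})$ to dualizable, hence locally free, objects of $\ca{B}$; the simple unit forces these to have constant rank, so the image lies in $\ca{B}_{\ell f}$, which conversely lies in the image by construction. The simple-unit hypothesis also makes $\ca{B}_{\ell f}$ closed under the finite colimits computed in $T(\ca{B})$, so the essential image is precisely $\ca{B}_{\ell f}$. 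I expect the genuine obstacle throughout to be the exactness and faithfulness of the realization functor $\Ind(E)$ on $\Lex_{\Sigma}$ (drawing on \cite{SCHAEPPI_COLIMITS,SCHAEPPI_STACKS}), and the one point where the simple unit is indispensable is in pinning the essential image down to exactly the constant-rank locally free objects.
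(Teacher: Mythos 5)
There is a genuine gap, and it is one of logical order. You propose to first establish that $\Ind(E)$ is exact and faithful and then transport simplicity of $O \in \ca{B}$ back to the unit of $T(\ca{B})$; the paper does the reverse, and for good reason: faithfulness of $E$ is at least as hard as the simplicity claim. Concretely, the unit of $T(\ca{B})$ is simple precisely when every non-zero morphism $p \colon V \rightarrow O$ with $V \in \ca{B}_{\ell f}$ induces an epimorphism of sheaves, i.e.\ is a \emph{locally split} epimorphism in $\Ind(\ca{B})$ (via \cite[Proposition~5.4.13]{SCHAEPPI_COLIMITS}). If some non-zero $p$ failed to be locally split, its cokernel $C$ in $T(\ca{B})$ would be non-zero, yet $E(C) \cong \mathrm{coker}(p) = 0$ since $O$ is simple in $\ca{B}$ --- so $E$ would not be faithful. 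Thus you cannot obtain faithfulness of $\Ind(E)$ as an independent input; it presupposes exactly the statement you want to deduce from it. The two mechanisms you offer do not escape this circle: the sheaf condition for the locally split topology is not ``engineered'' to make the realization exact (exactness of realization is equivalent to a flatness/cover-preservation condition that is the crux here), and Corollary~\ref{cor:adams} requires the unit of the \emph{source} to be simple and the functor to be already exact, so invoking it for $\Ind(E)$ begs the question. Your parenthetical alternative (``locally split covers propagate vanishing or fullness'') has the same problem: a non-zero subsheaf of the unit contains some non-zero $p \colon V \rightarrow O$, and you need that $p$ to be a cover, which is the open point.

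The missing content is precisely the positive-characteristic argument that every non-zero $p \colon V \rightarrow O$ with $V$ locally free of constant rank is locally split. In characteristic zero this is automatic (\cite[Corollary~5.3.5]{SCHAEPPI_COLIMITS}), but in characteristic $p$ an epimorphism onto a simple unit need not be locally split, and the paper handles this via Lemma~\ref{lemma:locally_split}: one must show the composites $O \rightarrow V^{\otimes i} \rightarrow \Sym^i(V)$ have non-zero (hence epimorphic) duals for all $i$, which after base change to split $V$ and reduction to $\Mod_B$ with $B = \End(O)$ becomes the assertion that $(\sum b_k x_k)^i \neq 0$ in $B[x_1,\ldots,x_d]$ whenever $B \rightarrow B^{\oplus d}$ is a monomorphism --- exactly McCoy's theorem (Theorem~\ref{thm:mccoy}). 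None of this appears in your proposal. Your treatment of the ``moreover'' clause is closer to the paper (the identification $T(\ca{B}) = T(\ca{B})_{\ell f} \simeq \ca{B}_{\ell f}$ once $T(\ca{B})$ is Tannakian, and left exactness of $E$ from rigidity of the domain via \cite[Corollaire~2.10(i)]{DELIGNE}), but note that in the paper exactness of $E$ is the \emph{last} step, harvested from rigidity, not the first.
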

 
 Note that the endomorphism ring of the unit object is necessarily a field if it is simple, so we can assume that $R$ is a field without loss of generality. If this field has characteristic zero, then the above theorem is relatively straightforward to prove, but it is more involved in positive characteristic. We will use McCoy's theorem.
 
 \begin{thm}\label{thm:mccoy}
  Let $B$ be a commutative ring, $f \in B[ x_1, \ldots , x_n ] $ a polynomial in $n$ variables. If $f$ is a zero-divisor in $B[ x_1, \ldots , x_n ]$, then there exists a non-zero element $b \in B$ which annihilates $f$.
 \end{thm}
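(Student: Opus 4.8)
The plan is to reduce the multivariate statement to the single-variable case by a Kronecker substitution, and then to dispatch the one-variable case by the classical minimal-degree-annihilator argument.

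For the reduction, suppose $fh = 0$ for some nonzero $h \in B[x_1, \ldots, x_n]$. I would choose an integer $D$ strictly larger than every exponent of every variable occurring in $f$ or in $h$, and consider the $B$-algebra homomorphism $\varphi \colon B[x_1, \ldots, x_n] \to B[t]$ determined by $x_i \mapsto t^{D^{i-1}}$. Because the assignment $(e_1, \ldots, e_n) \mapsto \sum_i e_i D^{i-1}$ is the base-$D$ expansion, it is injective on $\{0, \ldots, D-1\}^n$; hence $\varphi$ carries distinct such monomials to distinct (thus $B$-linearly independent) powers of $t$, so it is injective on the $B$-submodule of polynomials all of whose exponents are $< D$. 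In particular $\varphi(f)$ and $\varphi(h)$ are nonzero, while $\varphi(f)\varphi(h) = \varphi(fh) = 0$, so $\varphi(f)$ is a zero-divisor in $B[t]$. Once the one-variable case supplies a nonzero $b \in B$ with $b\varphi(f) = 0$, the identity $\varphi(bf) = b\varphi(f) = 0$ together with the injectivity of $\varphi$ on low-exponent polynomials forces $bf = 0$, as desired.

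For the one-variable case, write $f = \sum_{i=0}^m a_i x^i$ and choose among all nonzero $g \in B[x]$ with $fg = 0$ one of minimal degree, say $g = \sum_{j=0}^s b_j x^j$ with $b_s \neq 0$. The goal is to prove that $a_i g = 0$ for every $i$, which I would establish by downward induction on $i$. The base step uses that the top coefficient $a_m b_s$ of $fg$ vanishes; hence $a_m g$ is either zero or a nonzero annihilator of $f$ of degree $< s$, and minimality forces $a_m g = 0$. For the inductive step, assuming $a_p g = 0$ for all $p > i$, one inspects the coefficient $\sum_{p+q=i+s} a_p b_q$ of $x^{i+s}$ in $fg = 0$: every term with $p > i$ already vanishes, and the surviving terms satisfy $q = i+s-p \ge s$, so with $q \le s$ only $p = i, q = s$ remains; the relation thus collapses to $a_i b_s = 0$. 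Again $a_i g$ has degree $< s$ and annihilates $f$, so $a_i g = 0$. Taking $i$ down to $0$ yields $a_i b_s = 0$ for all $i$, i.e. $b_s f = 0$ with $b_s \neq 0$.

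The only genuinely delicate point is this inductive step, where one must verify that after discarding the terms known to vanish the coefficient of $x^{i+s}$ really does reduce to the single product $a_i b_s$; this hinges precisely on the degree pincer $q \le s$ and $p \le i$ forcing $p = i$, $q = s$, and on the choice of $g$ of \emph{minimal} degree. Everything else---the exactness of the Kronecker collapse and the bookkeeping of degrees---is routine.
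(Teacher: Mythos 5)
Your proof is correct. There is, however, nothing in the paper to compare it against: the paper does not prove this statement but simply cites McCoy's original article (Theorem~3 there), so you have supplied an actual argument where the paper defers to the literature. Your one-variable argument is the classical minimal-degree-annihilator proof, carried out correctly: the downward induction establishing $a_i g = 0$, the degree pincer ($p \le i$ and $q \le s$ with $p+q=i+s$ forcing $p=i$, $q=s$), and the appeal to minimality of $\deg g$ are all in order, and the conclusion $b_s f = 0$ follows. Your reduction of the several-variable case to one variable via the Kronecker substitution $x_i \mapsto t^{D^{i-1}}$ is also valid: $\varphi$ is a $B$-algebra homomorphism that maps the monomials with all exponents below $D$ bijectively onto distinct powers of $t$, hence is injective on the free $B$-submodule they span; this preserves the nonvanishing of $f$, $h$, and $bf$ (the last because $bf$ is a $B$-scalar multiple of $f$ and so again has all exponents below $D$), while turning $fh=0$ into $\varphi(f)\varphi(h)=0$ in $B[t]$. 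This is a genuinely different, and arguably slicker, route than the one usually found in the literature, where the multivariate case is handled by induction on the number of variables or by ordering monomials lexicographically and rerunning the minimal-annihilator argument on leading coefficients; the Kronecker trick lets you invoke the one-variable case verbatim at the cost of only the elementary base-$D$ uniqueness observation.
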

 
\begin{proof}
 This is proved in \cite[Theorem~3]{MCCOY}.
\end{proof}

 We will also need the following criterion for a morphism with target the unit object to be a locally split epimorphism. It is only applicable in the case where $\ca{C}=\Ind(\ca{B})$ is abelian, which explains why we need to assume that $\ca{B}$ is ind-abelian in Theorem~\ref{thm:universal}.
 
 \begin{lemma}\label{lemma:locally_split}
 Let $\ca{C}$ be a locally finitely presentable abelian symmetric monoidal closed category, $V \in \ca{C}$ a locally free object of rank $d \in \mathbb{N}$. Let $p \colon V \rightarrow O$ be an epimorphism. Then $p$ is locally split if and only if the duals of the morphisms
 \[
 \xymatrix{ O \ar[r]^-{p^{\otimes i}} & V^{\otimes i} \ar[r] & \Sym^{i}(V) }
 \]
 are epimorphisms for all $i \in \mathbb{N}$.
 \end{lemma}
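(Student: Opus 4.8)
The plan is to produce an explicit universal algebra over which $p$ splits and to recognise the stated condition as precisely the assertion that this algebra is an Adams algebra, hence faithfully flat. Since $V$ has a dual, dualising $p$ gives a morphism $\ldual{p}\colon O\to\ldual{V}$; regarding $\ell\defl\ldual{p}$ as a degree-one element of the symmetric algebra $\Sym(\ldual{V})=\bigoplus_{i\ge 0}\Sym^{i}(\ldual{V})$, the morphisms in the statement are the composites $O\xrightarrow{(\ldual{p})^{\otimes i}}(\ldual{V})^{\otimes i}\to\Sym^{i}(\ldual{V})$, that is, multiplication by $\ell^{i}$. I write $q_{i}\colon\Sym^{i}(\ldual{V})^{\vee}\to O$ for their duals, so that the hypothesis reads: every $q_{i}$ is an epimorphism.

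The relevant algebra is $B\defl\Sym(\ldual{V})/(\ell-1)$. Since $V$ is dualizable, $\Sym(\ldual{V})$ corepresents points of $V$ (morphisms $O\to V\otimes(-)$), and passing to the quotient by $\ell-1$ forces the universal point to be a section of $p$; consequently $B\otimes p$ is a split epimorphism of $B$-modules, witnessed by the tautological point. Thus $p$ will be locally split as soon as $B$ is faithfully flat, and it remains to extract the latter from the hypothesis.

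For this I would use the dehomogenisation isomorphism $B\cong\colim\bigl(\Sym^{0}(\ldual{V})\xrightarrow{\cdot\ell}\Sym^{1}(\ldual{V})\xrightarrow{\cdot\ell}\cdots\bigr)$ of objects of $\ca{C}$, the standard identification of $S/(\ell-1)$ with the colimit along multiplication by a degree-one element of a graded algebra $S$. Because $V$ is locally free of rank $d$, each $\Sym^{i}(\ldual{V})$ becomes free of rank $\binom{d+i-1}{i}$ after a faithfully flat base change trivialising $V$ (such a base change is strong monoidal, hence commutes with $\Sym^{i}$), and is in particular dualizable. Under the colimit presentation the unit $\eta\colon O\to B$ is the filtered colimit of the morphisms $\eta_{i}\colon O\to\Sym^{i}(\ldual{V})$ given by multiplication by $\ell^{i}$, which satisfy $(\cdot\ell)\circ\eta_{i}=\eta_{i+1}$ and compose with the colimit cocone to $\eta$; their duals are exactly the $q_{i}$. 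Under the hypothesis, $\eta$ is therefore a filtered colimit of morphisms $\eta_{i}$ into dualizable objects whose duals $\eta_{i}^{\vee}=q_{i}$ are epimorphisms, which is precisely the definition of an Adams algebra. By the criterion recalled above (see \cite[\S 4.1]{SCHAEPPI_COLIMITS}, \cite[Proposition~4.1.6]{SCHAEPPI_COLIMITS}), $B$ is faithfully flat, and hence $p$ is locally split.

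The converse is a routine descent computation. If $p$ is locally split, pick a faithfully flat algebra $B'$ for which $B'\otimes p$ splits; then $B'\otimes\ldual{V}\cong B'\oplus N$ with $\ell$ the inclusion of the first summand, so $\ell^{i}$ is the split inclusion of the summand $\Sym^{i}(B')\cong B'$ of $B'\otimes\Sym^{i}(\ldual{V})$. Dualising shows $B'\otimes q_{i}$ is a split epimorphism, and since a faithfully flat algebra reflects epimorphisms, each $q_{i}$ is an epimorphism. The crux of the argument—and its only non-formal step—is the identification of the universal splitting algebra $B=\Sym(\ldual{V})/(\ell-1)$ with the filtered colimit of its symmetric powers along multiplication by $\ell$: it is this that converts the individual epimorphism conditions on the $q_{i}$ into an Adams algebra structure on $B$, from which faithful flatness follows automatically.
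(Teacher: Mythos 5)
Your argument is correct, but it takes a genuinely different (and much more self-contained) route than the paper. The paper's proof is essentially a citation: it observes that the lemma is precisely the conclusion of \cite[Proposition~5.3.4(iv)]{SCHAEPPI_COLIMITS}, and that the only hypothesis left to verify is that each $\Sym^{i}(\ldual{V})$ has a dual, which it checks by realizing $\ldual{V}$ as the image of the standard representation under a symmetric strong monoidal functor $\Rep(\mathrm{GL}_d) \rightarrow \ca{C}$, so that $\Sym^{i}(\ldual{V})$ is the image of the $i$-th symmetric power of the standard representation and hence locally free. What you have done instead is reconstruct the content of that cited proposition from scratch: the universal splitting algebra $B=\Sym(\ldual{V})/(\ell-1)$ with $\ell=\ldual{p}$, its dehomogenization as $\colim\bigl(\Sym^{0}(\ldual{V})\rightarrow\Sym^{1}(\ldual{V})\rightarrow\cdots\bigr)$ along multiplication by $\ell$, and the observation that the hypothesis on the duals $q_{i}$ is exactly the Adams algebra condition on $B$, whence faithful flatness; your converse via faithfully flat descent of epimorphisms is also right. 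This buys transparency --- one sees exactly where the faithful flatness comes from and why the condition is stated in terms of all symmetric powers --- at the cost of redoing work the paper outsources. Two caveats: (a) the displayed composite in the statement cannot literally start at $O$ if it is built from $p\colon V\rightarrow O$; your reading of it as $(\ldual{p})^{\otimes i}$ followed by the projection to $\Sym^{i}(\ldual{V})$ is the intended one, consistent with the paper's own proof, which speaks of $\Sym^{i}(M^{\vee})$; (b) your justification that $\Sym^{i}(\ldual{V})$ is dualizable appeals to descent of dualizability along an arbitrary faithfully flat algebra, which is not a formal fact in this generality --- this is exactly the point the paper's $\Rep(\mathrm{GL}_d)$ argument is designed to handle, so you should either adopt that argument or cite a precise descent statement. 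Neither caveat invalidates the proof.
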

 
 \begin{proof}
  This is precisely the conclusion of \cite[Proposition~5.3.4(iv)]{SCHAEPPI_COLIMITS}, so we only need to check that its conditions are satisfied, namely that $\Sym^{i}(M^{\vee})$ has a dual for all $i \in \mathbb{N}$. Since $M^{\vee}$ is locally free of rank $d$, it is isomorphic to the image of the standard representation of $\mathrm{GL}_d$ under some symmetric strong monoidal functor $\Rep(\mathrm{GL}_d) \rightarrow \ca{C}$ (see \cite[Theorem~4.3.10]{SCHAEPPI_COLIMITS}). This reduces the problem to checking that symmetric powers of the standard representation are locally free, which follows directly from the fact that symmetric powers of free modules are free. 
 \end{proof}
 
 \begin{proof}[Proof of Theorem~\ref{thm:universal}]
  To show that the universal weakly Tannakian category $T(\ca{B})$ is Tannakian, it suffices to check that its unit object is simple (see Theorem~\ref{thm:simple_unit_iff_tannakian}). This category has a generator given by the representable presheaves $\ca{B}(-,V)$ with $V \in \ca{B}_{\ell f}$. We therefore need to show that all morphisms
\[
  \ca{B}(-,p) \colon \ca{B}(-,V) \rightarrow \ca{B}(-,O)
\] 
 are either zero or epimorphisms in $\Lex_{\Sigma}$. Since $\ca{B}(-,p)$ is an epimorphism in $\Lex_{\Sigma}$ if and only if $p$ is a locally split epimorphism in $\Ind(\ca{B})$ (see \cite[Proposition~5.4.13]{SCHAEPPI_COLIMITS}), this amounts to showing that any non-zero morphism $p \colon V \rightarrow O$ is a locally split epimorphism.
  
 Since the unit object is simple and any epimorphism is locally split in characteristic zero (see \cite[Corollary~5.3.5]{SCHAEPPI_COLIMITS}), the proof is complete if $\End(O)$ has characteristic zero. In the general case, we will apply Lemma~\ref{lemma:locally_split}. Since $O$ is simple, this amounts to checking that the the dual of the morphism
\[
  \xymatrix{ O \ar[r]^-{p^{\otimes i}} & V^{\otimes i} \ar[r] & \Sym^{i}(V) }
\]
 is non-zero for all $i \in \mathbb{N}$ and all non-zero morphisms $p \colon V \rightarrow O$. This is clearly the case if and only if the morphism itself is non-zero. Note that the dual $p^{\vee} \colon O \rightarrow V^{\vee}$ of $p$ is a monomorphism since $O$ is simple.
 
 To check that a morphism is non-zero, it suffices to check that its image under some faithful functor is non-zero. By definition of locally free objects of finite rank, we can thus assume that $V \cong O^{\oplus d}$ at the expense of working in some symmetric monoidal category $\ca{D}$ whose unit object $O$ is no longer simple. However, the property that the dual morphism $p^{\vee} \colon O \rightarrow O^{\oplus d}$ is a monomorphism is retained since the base change functor appearing in the definition of locally free objects of rank $d$ commutes with finite limits.
 
 Since the canonical symmetric strong monoidal left adjoint
 \[
 \Mod_{\End(O)} \rightarrow \ca{D}
 \]
 is full and faithful on finitely generated free modules, we can assume without loss of generality that $\ca{D} \simeq \Mod_B$ where $B=\End(O)$. In this case, the claim we want to prove is a statement about polynomials in $d$ variables. Namely, we want to show that, given elements $b_1, \ldots , b_d \in B$ for which the induced morphism $B \rightarrow B^{d}$ is a monomorphism, the $i$-fold power $(\sum b_k x_k)^{i}$ is non-zero in the polynomial ring $B[x_1 , \ldots , x_d]$. 
 
 If this power were zero, the polynomial would in particular be a zero-divisor, so McCoy's Theorem (see Theorem~\ref{thm:mccoy}) would be applicable. But the existence of a non-zero $b \in B$ annihilating $\sum b_k x_k$ contradicts the fact that the morphism $B \rightarrow B^{\oplus d}$ given by the elements $b_k$ is a monomorphism. This concludes the proof that any non-zero morphism $p \colon V \rightarrow O$ in $\Ind(\ca{B})$ where $V$ is locally free of constant finite rank is a locally split epimorphism. Thus the unit of $T(\ca{B})$ is indeed simple, as claimed, and $T(\ca{A})$ is Tannakian by Theorem~\ref{thm:simple_unit_iff_tannakian}.
 
 We next show that the functor $E \colon T(\ca{B}) \rightarrow \ca{B}$ is full and faithful. First note that any object of a Tannakian category is locally free of constant finite rank, as witnessed by any fiber functor with target a category of vector spaces. From \cite[Proposition~5.5.1]{SCHAEPPI_COLIMITS} it follows that the Yoneda embedding
 \[
 Y \colon \ca{B}_{\ell f} \rightarrow T(\ca{B})_{\ell f} = T(\ca{B})
 \]
 is an equivalence of categories. By construction, the composite $E \circ Y$ is isomorphic to the inclusion $\ca{B}_{\ell f} \rightarrow \ca{B}$. Thus $\ca{E}$ is indeed full and faithful with essential image given by $\ca{B}_{\ell f}$.
 
 Finally, it remains to check that $E$ is exact. Since the domain of $E$ is Tannakian, this follows from the fact that any right exact symmetric strong monoidal functor whose domain is rigid and whose codomain is closed is also left exact (see \cite[Corollaire~2.10(i)]{DELIGNE}).
 \end{proof}


\bibliographystyle{amsalpha}
\bibliography{universal}

\end{document}